\newcommand{\R}{\mathbb R}
\newcommand{\N}{\mathbb N}
\newcommand{\C}{\mathbb C}
\newtheorem{theorem}{Theorem} [section]
\newtheorem{lemma}{Lemma} [section]
\newtheorem{proposition}{Proposition} [section]
\newtheorem{corollary}{Corollary} [section]
\let\ssection=\section\renewcommand{\section}{\setcounter{equation}{0}\ssection}
\begin{document}

\title [ Eigenvalues of the sub-Laplace operator]{Inequalities and bounds for the eigenvalues  of the sub-Laplacian on a strictly pseudoconvex CR manifold}
\author{ Amine Aribi }
\address{Amine Aribi : Laboratoire de Math\'{e}matiques et Physique Th\'{e}orique,  UMR-CNRS 6083, Universit\'{e} Fran\c{c}ois Rabelais de Tours, Parc de  Grandmont, 37200 Tours, France} \email{Amine.Aribi@lmpt.univ-tours.fr }

\author{Ahmad El Soufi}
\address{
 Laboratoire de Math\'{e}matiques et Physique Th\'{e}orique,
  UMR-CNRS 6083, Universit\'{e} Fran\c{c}ois Rabelais de Tours, Parc de
  Grandmont, 37200 Tours, France}
  \email{Ahmad.Elsoufi@lmpt.univ-tours.fr }

\thanks{The second author has benefited from the support of the ANR (Agence Nationale de la Recherche) through FOG project ANR-07-BLAN-0251-01.}

\begin{abstract}We establish inequalities for the eigenvalues of the sub-Laplace operator associated with a pseudo-Hermitian structure on a strictly pseudoconvex
 CR manifold. Our inequalities extend those obtained by Niu and Zhang \cite{NiuZhang} for the Dirichlet eigenvalues of the sub-Laplacian on a bounded domain in the Heisenberg group and are in the spirit of the well known   Payne-P\'{o}lya-Weinberger and  Yang universal inequalities.

\end{abstract}

\subjclass[2000]{32V20, 35H20,  58J50.}
\keywords{CR manifold, sub-Laplacian, eigenvalue, Carnot group}

\maketitle
\section{Introduction}
 The sub-Laplacian $\Delta_b$ associated with a pseudo-Hermitian structure on a strictly pseudoconvex CR manifold $M$ is  prototypical of a class of subelliptic operators which appear naturally in several geometric situations that could be gathered under the concept of ``Heisenberg manifolds". The recent work of Ponge \cite{ponge} provides a detailed discussion and a fairly comprehensive presentation of the spectral properties of such operators, including  Weyl asymptotic formulae and heat kernel expansions. 
 
 The sub-Laplacian $\Delta_b$ plays a fundamental role in CR geometry, similar to that played by the Laplace-Beltrami operator in Riemannian geometry (e.g., CR Yamabe problem). Several works published in recent years are devoted to the study of this operator and the investigation of its spectral properties, see for instance \cite{ BarlettaDragomirSpectrum, Barletta, BarlettaDragomirSublaplacians, Changchiu, Greenleaf, LiLuk,  NiuZhang,  ponge}.  In particular, it is known that  $\Delta_b$ is subelliptic of order $\frac 12$, that is for each $x \in M$, there exist a neighborhood $U\subset M$ and a constant $C>0$  such that, $\forall\; u\in C^\infty_0 (U)$,
 $$\|u\|^2_{H^{1/2}}\leq C\langle\left(-\Delta_b +I\right)u, u\rangle_{L^2}.$$ 
This a priori estimate leads to the proof of the  hypoellipticity of $\Delta_b$ and the discreteness of its spectrum when $M$ is a closed manifold 
(see \cite{BarlettaDragomirSpectrum, BarlettaDragomirSublaplacians, MenikoffSjostrand}). Since the pioneering work of Greenleaf  \cite{Greenleaf}, many recent contributions aim to extend to the CR context some of the spectral geometric results established in the Riemannian setting such as Li-Yau or Lichnerowicz-Obata inequalities (see, for example, \cite{BarlettaDragomirSpectrum, Barletta, Changchiu, LiLuk}). It is worth noticing that the determination of the eigenvalues of the sub-Laplacian on the standard CR sphere $\mathbb{S}^{2n+1}$ remains an open problem, except likely for $n=1$ according to \cite {PENGCHEN}.

In this article, we focus on finding bounds on the eigenvalues in the same vein as Payne-P\'{o}lya-Weinberger universal inequalities \cite{PPW}. These inequalities, established in the 1950's for the eigenvalues of the Dirichlet Laplacian in a bounded domain of the Euclidean space $\R^n$, were first stated as follows: for every $k\ge 1$,
\begin{equation}\label{ppw} 
\lambda_{k+1}-\lambda_k\le \frac 4n \left\{\frac 1k\sum_{i=1}^k\lambda_i\right\},
\end{equation}
before being improved by several authors (see for instance \cite{Ashb, hileprotter, Yang}). For example, the following inequality due to Yang \cite{Yang} implies \eqref{ppw} :
\begin{equation}\label{yang} 
\sum_{i=1}^k(\lambda_{k+1}-\lambda_i)^2\le \frac 4n \sum_{i=1}^k\lambda_i(\lambda_{k+1}-\lambda_i).
\end{equation}
Extensions of universal inequalities to bounded domains in Riemannian manifolds other than the Euclidean space have also been obtained.  Let us mention, for example, the  following Yang's type inequality obtained by Ashbaugh \cite{Ashb} for domains of the unit sphere $\mathbb{S}^{n}\subset\R^{n+1}$ (see also \cite{ChengYang05}):
 \begin{equation}\label{ash} 
\sum_{i=1}^k(\lambda_{k+1}-\lambda_i)^2\le \frac 4n \sum_{i=1}^k(\lambda_{k+1}-\lambda_i)(\lambda_i+\frac{n^2}4).
\end{equation}
It is a remarkable fact that the equality holds for every $k$ in this last inequality when the $\lambda_i$ are the eigenvalues of the Laplace-Beltrami operator  on the whole sphere. This  fact was observed by El Soufi, Harrell and Ilias in their paper \cite{SHI} where  inequality \eqref{ash}, as well as many other inequalities in the literature, are recovered as particular cases of the following inequality which applies to the eigenvalues of the Laplace-Beltrami operator of any $n$-dimensional compact Riemannian manifold $M$, with Dirichlet boundary conditions if $\partial M\neq\emptyset$,
 \begin{equation}\label{riem mfd} 
\sum_{i=1}^k(\lambda_{k+1}-\lambda_i)^2\le \frac 4n \sum_{i=1}^k(\lambda_{k+1}-\lambda_i)(\lambda_i+\frac14 {\|H\|_\infty^2}),
\end{equation}
where $H$ is the mean curvature vector field of any isometric immersion of $M$ into a Euclidean space $\R^{n+p}$. Notice that inequality \eqref{riem mfd} had also been found independently by Chen and Cheng \cite{ChenCheng} for the Dirichlet eigenvalues on a bounded domain of a Riemannian  manifold.

    Niu and Zhang \cite{NiuZhang} were certainly the first to address this issue for subelliptic operators. They obtained Payne-P\'{o}lya-Weinberger and Hile-Protter type inequalities for the Dirichlet eigenvalues of the sub-Laplacian on a bounded domain  of the Heisenberg group $\mathbb{H}^n$ of real dimension $2n+1$. The following Yang type inequality has been obtained in this context in \cite{SHI} as an improvement of Niu-Zhang results: 
     \begin{equation}\label{ehi-heis} 
\sum_{i=1}^k(\lambda_{k+1}-\lambda_i)^2\le \frac 2n \sum_{i=1}^k\lambda_i(\lambda_{k+1}-\lambda_i).
\end{equation}
    
    In what follows (see Corollary \ref{corollary4} below), we will prove that inequality \eqref{ehi-heis} remains valid for any strictly pseudoconvex CR manifold $M$ of real dimension $2n+1$ provided it admits a Riemannian submersion over an open set of $\R^{2n}$ which is constant along the characteristic curves of $M$ (i.e. the integral curves of the Reeb vector field). Of course, the standard projection ${\mathbb{H}^{n}}\to \R^{2n}$ satisfies these assumptions. 
    
    As for the CR sphere $\mathbb{S}^{2n+1}$ and domains of $\mathbb{S}^{2n+1}$, we will obtain  the following inequality (Corollary \ref{sphere}): 
      \begin{equation*}
    \sum_{i=1}^k\big(\lambda_{k+1}-\lambda_i\big)^2 \leq \frac {2}n \sum_{i=1}^k \big(\lambda_{k+1}-\lambda_i\big)\big(\lambda_i+n^2\big),
    \end{equation*}
which is sharp for $k=1$. 

All these results are actually particular cases of a more general result (Theorem \ref{them1}) that we establish in Section 3 for an arbitrary strictly pseudoconvex CR manifold $M$ of real dimension $2n+1$ endowed with a compatible pseudo-Hermitian structure $\theta$. Indeed, we prove that the eigenvalues of the sub-Laplacian $\Delta_b$ in a bounded domain $\Omega\subset M$, with Dirichlet boundary conditions if $\Omega\not= M$, satisfy inequalities of the form (see Theorem \ref{them1} for a complete statement): for every integer $k\ge1$ and every $p\in\R$,
\begin{equation}\label{ae-euc1}
    \sum_{i=1}^k\big(\lambda_{k+1}-\lambda_i\big)^p \leq\frac {\max \{2,p\}}n \sum_{i=1}^k \big(\lambda_{k+1}-\lambda_i\big)^{p-1}\big(\lambda_i+ \frac 14 \|H_b(f)\|^2_\infty \big), 
\end{equation}
\begin{equation}\label{ae-euc2}
    \lambda_{k+1}\leq (1+\frac{2}{n})\frac{1}{k}\sum_{i=1}^k\lambda_i+\frac{1}{2n}\|H_b(f)\|^2_\infty ,     
\end{equation}
and
\begin{equation}\label{ae-euc3}
    \lambda_{k+1}\leq (1+\frac{2}{n})k^{\frac{1}{n}}\lambda_1+
    \frac 14\left((1+\frac{2}{n})k^{\frac{1}{n}}-1\right)\|H_b(f)\|^2_\infty,      
\end{equation}
  where $f$ is any $C^2$ semi-isometric map from $(M,\theta)$ to a Euclidean space $\R^m$, and where $H_b(f)$ is a vector field defined similarly to the tension vector field in the Riemannian case (see Section 2 for  definitions). 
   
Besides the CR sphere and Heisenberg groups, many other cases in which one has an explicit expression for $\|H_b(f)\|_\infty$ are given in a series of corollaries in Section 4.

In Section 5 we prove that the inequalities  \eqref{ae-euc1}, \eqref{ae-euc2} and \eqref{ae-euc3} remain true when $f$ is a semi-isometric map from $(M,\theta)$ to a Heisenberg group $\mathbb{H}^m$ which sends the horizontal distribution of $M$ into that of $\mathbb{H}^m$. This can also be seen as a generalization of what was known about the Dirichlet eigenvalues of the sub-Laplacian in a bounded domain of the Heisenberg group, since the identity map of $\mathbb{H}^n$ obviously satisfies $H_b(I_{\mathbb{H}^n})=0$.  

When $M$ is compact without boundary, one has $\lambda_1=0$ and the inequality \eqref{ae-euc3} leads to a relationship between the eigenvalues $\lambda_k$ of the sub-Laplacian of $(M,\theta)$ and the invariant $H_b(f)$ of any semi-isometric map $f$ from $(M,\theta)$ to a Euclidean space. For the first positive eigenvalue $\lambda_2$, we even have the following  inequality 
:
  \begin{equation}\label{ae-reilly}
    \lambda_2(-\Delta_b)\leq \frac{1}{2nV(M,\theta)}\int_M |H_b(f)|_{\R^m}^2,
  \end{equation}
where $V(M,\theta)$ is the volume of $(M,\theta)$. Section 6 deals with these Reilly type inequalities and the characterization of equality cases. For example, we show that the equality holds in \eqref{ae-reilly} if and only if $f(M)$ is contained in a sphere $\mathbb{S}^{m-1}(r)$ of radius $r=\sqrt{\frac {2n}{\lambda_2(-\Delta_b)} }$ and $f$ is a pseudo-harmonic map from $M$ to $S^{m-1}(r)$. 

These Reilly type results are also extended to maps $f$ from $(M,\theta)$ to a Heisenberg group $\mathbb{H}^m$ which sends the horizontal distribution of $M$ into that of $\mathbb{H}^m$ (see Theorem \ref{them4'}).

The last part  of the paper deals with Carnot groups which constitute a natural generalization of Heisenberg groups. A Carnot group is equipped with a natural operator called ``horizontal Laplacian". We give  PPW and Yang type inequalities for the eigenvalues of the horizontal Laplacian in terms of the rank of the horizontal distribution of the group.

\subsubsection*{Acknowledgments} The authors would like to warmly thank  S. Dragomir, N. Gamara, R. Petit and A. Zeghib for useful discussions.

\section{Preliminaries}

Let $M$ be an  orientable CR manifold of CR dimension $n$. This means that $M$ is an  orientable  manifold of real dimension $2n+1$ equipped with a pair $(H(M),J)$, where $H(M)$  is a subbundle  of the tangent bundle $TM$ of real rank $2n$ (often called  Levi distribution) and $J$ is an integrable complex structure on $H(M)$. The integrability condition for $J$ means that, $\forall X,Y\in\Gamma(H(M))$,
$$[X,Y]-[J X,JY] \ \in \ \Gamma(H(M))$$
and
$$[J X,Y]+[X,J Y] = J\left([X,Y]-[J X,JY]\right).$$
Since $M$ is orientable, there exists a nonzero 1-form $\theta\in \Gamma(T^* M)$ such that $Ker\theta = H(M)$. Such a 1-form, called  \emph{pseudo-Hermitian} structure on $M$, is  of course not unique. Actually, the set of pseudo-Hermitian structures that are compatible with the CR-structure of $M$ consists in all the forms $f\theta$ where $f$ is a smooth nowhere zero function on M.

To each  pseudo-Hermitian structure  $\theta$ we associate its \emph{\textbf{Levi form}} $G_{\theta}$ defined on $H(M)$  by
    \begin{equation*}
        G_{\theta}(X,Y)=-d\theta(JX,Y)=\theta([J X,Y])
    \end{equation*}
(note that a factor $\frac 12$ is sometimes put before $d\theta$ so that  in the case of the sphere $\mathbb{S}^{2n+1}\subset \mathbb{C}^{n+1}$, the  Webster metric defined below coincides with the standard metric).
 
The integrability of $J$ implies that $G_{\theta}$ is symmetric and $J$-invariant. The $CR$ manifold $M$ is said to be \textbf{\emph{strictly pseudoconvex}} if the Levi form  $G_{\theta}$ of a compatible pseudo-Hermitian structure $\theta$ is either positive definite or negative definite.  Of course, this condition does not depend on the choice of $\theta$. It implies that the distribution $H(M)$ is far from being integrable.

In all the sequel, a pair $(M,\theta)$ will be called strictly pseudoconvex CR manifold if $M$ is a strictly pseudoconvex CR manifold endowed with a compatible pseudo-Hermitian structure $\theta$ with positive definite Levi form. The structure $\theta$ is then a contact form which induces on $M$ the following volume form  $$\vartheta_{\theta}=\frac{1}{2^n\ n!} \ \theta\wedge (d\theta)^n .$$ 
We will denote by $V(M,\theta)$ the volume of $M$ with respect to $\vartheta_{\theta}$.
    
A pseudo-Hermitian structure $\theta$ on a strictly pseudoconvex CR manifold determines a  vector field $\xi$, often called  characteristic direction or \emph{\textbf{Reeb vector field}} of $\theta$, defined to be the  unique tangent vector field on $M$  satisfying  $\theta(\xi)=1$ and $\xi\rfloor d\theta=0$. Therefore, $L_{\xi}\theta=0$ and $[H(M),\xi]\subset H(M)$.

The \textbf{\emph{Tanaka-Webster connection}} of a strictly pseudoconvex CR manifold $(M,\theta)$ is the unique affine connection $\nabla$ on $TM$ satisfying the following conditions :
\begin{enumerate}
  \item   $\nabla \theta=0$, $\nabla d\theta=0$ and $\nabla J=0$ (hence the distribution $H(M)$ and the vector field $\xi$ are parallel for $\nabla$) 
  \item The Torsion $T_\nabla$ of $\nabla$ is such that, $\forall X,Y\in H(M)$,
$$T_\nabla (X,Y)= -\theta([X,Y])\xi \quad \mbox{and}\quad T_\nabla (\xi,JX)=-J T_{\nabla}(\xi,X) \in H(M).$$
\end{enumerate}

\noindent\textbf{\emph{Basic examples :}}
Standard models for CR manifolds are given by the Heisenberg group and real hypersurfaces of complex manifolds. 
The Heisenberg group will be discussed in Section $5$. If $M$ is an orientable real hypersurface of $\mathbb{C}^{n+1}$, then the sub-bundle  $H(M)$ defined as the orthogonal complement of $J\nu$ in $TM$, where $\nu$ is a unit normal vector field  and $J$ is the standard complex structure of $\mathbb{C}^{n+1}$, is stable by $J$. The pair $(H(M),J)$ endows $M$ with a CR-structure whose compatible pseudo-Hermitian structures are represented by
\begin{equation*}
    \theta(X)=-\frac 1 2 \langle X,J\nu\rangle,
\end{equation*}
where $\langle ,\rangle$ is the standard inner product in $\mathbb{C}^{n+1}$. A straightforward calculation gives
 \begin{equation*}
    G_{\theta}(X,X)=\frac 1 2 \left(B(X,X) +B(JX,JX)\right),
 \end{equation*}
where $B$ is the second fundamental form of the hypersurface. Thus, $M$ is strictly pseudoconvex if and only if the $J$-invariant part of its second fundamental form is positive definite on   $H(M)$.

Since the second fundamental form of the sphere $\mathbb{S}^{2n+1}\subset \mathbb{C}^{n+1}$  coincides with the standard inner product, the above construction endows $\mathbb{S}^{2n+1}$  with a strictly pseudoconvex CR structure whose  Levi form is nothing but the restriction of the standard inner product to the horizontal bundle $H\left(\mathbb{S}^{2n+1}\right)$ where, for every $x\in \mathbb{S}^{2n+1}$, $H_x\left(\mathbb{S}^{2n+1}\right)$ is the orthogonal complement in $\C^{n+1}$ of the complex line passing through $x$.\\

\noindent\textbf{\emph{Sub-Laplacian :}} A Strictly pseudoconvex CR manifold $(M,\theta)$ is equipped with a natural second order differential operator $\Delta_b$ commonly known as the ``sub-Laplacian". This operator is defined in terms of the Tanaka-Webster connection $\nabla$ by:
\begin{equation*}
    \Delta_b u=trace_{G_\theta} \nabla  du. 
\end{equation*}
Given a local $G_{\theta}$-orthonormal frame $\{X_1,...,X_{2n}\}$  of $H(M)$, one has
\begin{equation*}
    \Delta_b u=\sum_{i=1}^{2n}\{X_i\cdot X_i\cdot u-(\nabla_{X_i}X_i).u\} = \sum_{i=1}^{2n}\langle \nabla_{X_i}\nabla^Hu, X_i \rangle_{G_\theta},
\end{equation*}
where $\nabla^Hu\in H(M)$ is the horizontal gradient of $u$ defined by, $\forall X\in H(M)$,  $X\cdot u=G_\theta (X, \nabla^Hu)$.
Integration by parts  yields 
for every compactly supported smooth function $u$ on $M$, 
$$\int_Mu \Delta_b u \ \vartheta_{\theta} =-\int_M |\nabla^Hu|^2_{G_{\theta}} \ \vartheta_{\theta}.$$

When $(M,\theta)$ is strictly pseudoconvex,  the Levi form $G_{\theta}$ extends to a Riemannian metric $g_{\theta}$ on $M$,  sometimes called the Webster metric, so that   the  decomposition $TM=H(M)\oplus \mathbb{R}\xi$ is orthogonal and the vector $\xi$ has unit length, that is, $\forall\; X,Y\in TM$,
   \begin{equation*}
    g_\theta(X,Y)= G_\theta(X^H,Y^H)+\theta(X)\theta(Y),
   \end{equation*}
where  $X^H=\pi_H X$ is the projection of $X$ on $H(M)$ with respect to the  decomposition $TM=H(M)\oplus \mathbb{R}\xi$. Notice that the  Riemannian volume form associated to $g_{\theta}$ coincides with $\vartheta_{\theta}$ (see \cite[Lemma 1]{BarlettaDragomirUrakawa1}). On the other hand,  the Levi-Civita connection  $\nabla^{g_\theta}$ of $(M,g_\theta)$ is related to the Tanaka-Webster connection $\nabla$ by the following identities (see for instance \cite[p.38]{DragomirTomassiniBook}):  for every pair $X$, $Y$ of horizontal vector fields,  $\nabla_X Y = (\nabla^{g_\theta}_X Y)^H$ and, moreover, 
$$  \nabla^{g_\theta}_\xi X- \nabla_\xi X= \frac 12 JX\ , \quad \nabla^{g_\theta}_X \xi-\nabla _X\xi=\nabla^{g_\theta}_X\xi= (\frac 12 J+\tau )X, $$
$$\nabla^{g_\theta}_X Y-\nabla_X Y= -\langle(\frac 12 J+\tau)X,Y\rangle_{g_{\theta}} \xi\ \quad \mbox{and} \quad \nabla^{g_\theta}_\xi\xi=\nabla_\xi\xi=0,$$
where $\tau:H(M)\longrightarrow H(M)$ is the traceless symmetric (1,1)-tensor defined by $\tau X=T_{\nabla}(\xi,X)=\nabla_\xi X -[\xi,X]  $.
Notice that $\tau=0$ if and only if $\xi$ is a Killing vector field w.r.t. the metric $g_\theta$ (and then the metric $g_\theta$ is a Sasakian metric on $M$).

If we denote by $\mbox{div}_{g_\theta} $ the divergence with respect to the metric $g_\theta$, one easily gets
\begin{equation}\label{equa0}
    \Delta_b u=\mbox{div}_{g_\theta} \nabla^H u,
\end{equation}
which immediately  leads 
 to  the following relationship, known as Greenleaf's formula:
$$\Delta_b=\Delta_{g_\theta} - \xi^2$$
where $\Delta_{g_\theta}$ is the Laplace-Beltrami operator of $(M, {g_\theta})$.\\

\noindent\textbf{\emph{Levi tension vector field 
:}} Let $(M,\theta)$ be a strictly pseudoconvex CR manifold of dimension $2n+1$ and let $(N,h)$ be a Riemannian manifold.  The energy  density of a smooth $f:(M,\theta) \longrightarrow (N,h)$ with respect to  horizontal   directions is defined at a point $x\in M$ by
\begin{equation*}
    e_b(f)_x=\frac{1}{2}trace_{G_\theta}(\pi_H f^* h)_x=\frac{1}{2}\sum_{i=1}^{2n}|df(X_i)|^2_h,
\end{equation*}
where $\{X_1,...,X_{2n}\}$ is a local $G_\theta$-orthonormal frame of $H(M)$.
According to \cite[Theorem 3.1]{BarlettaDragomirUrakawa}, the first variation of the  energy functional 
 \begin{equation*}
    E_b(f)=\int_M e_b (f)\vartheta_{\theta}
 \end{equation*}
is determined by the vector, that we will call  ``Levi tension'' of $f$,
\begin{equation*}
    H_b(f)=trace_{G_\theta}\beta_f,
\end{equation*}
where $\beta_f$ is the vector valued 2-form on $H(M)$ given by $$\beta_f(X,Y)=\nabla^f_Xdf(Y)-df(\nabla_X Y),$$  $\nabla^f$ is the connection induced on the bundle $f^{-1}TN$ by the Levi-Civita connection of $(N,h)$, and $\nabla$ is the Tanaka-Webster connection of $(M,\theta)$. That is,
\begin{equation*}
    H_b(f)=\sum_{i=1}^{2n} \nabla_{X_i}^fdf(X_i)-df(\nabla_{X_i}X_i).
\end{equation*}
Mappings with $H_b(f)=0$ are called \emph{pseudo-harmonic} by  Barletta, Dragomir and Urakawa  \cite{BarlettaDragomirUrakawa}. In the case where 
$(N,h)$ is the standard $\mathbb{R}^m$, it is clear that
\begin{equation}\label{equa1}
    H_b (f)=(\Delta_b f_1,...,\Delta_b f_m).
\end{equation}    
    Since $\nabla_X Y = (\nabla^{g_\theta}_X Y)^H=\nabla^{g_\theta}_X Y -\left\langle \left(\frac12 J+\tau\right)X ,Y \right\rangle_{G_\theta}\xi$ for every pair $(X, Y)$ of horizontal vector fields,  one has
$$\beta_f(X,Y)=B_f(X,Y)+\left\langle \left(\frac12 J+\tau\right)X,Y \right\rangle_{G_\theta}df(\xi)$$
and
$$ H_b(f)= H(f)-B_f(\xi,\xi)=H(f)-\nabla^f_\xi df(\xi)$$
where $B_f(X,Y)=\nabla^f_Xdf(Y)-df(\nabla^{g_\theta}_X Y)$ and 
$H(f)=trace_{g_\theta}B_f$ is the tension vector field (see \cite{EellsLemaire}). 
In the particular case where $f$ is an isometric immersion from $(M,g_\theta)$ to $(N,h)$, $B_f$ coincides with the second fundamental form of $f$ and $H(f)$ coincides with its mean curvature vector.

For the natural inclusion $j:\mathbb{S}^{2n+1}\hookrightarrow \mathbb{C}^{n+1}$ of $\mathbb{S}^{2n+1}$,  
the form $\beta_j$ is given by, $\beta_j(X,Y)=-\left\langle X,Y \right\rangle_{\C^{n+1}}\vec{x}+ \left\langle JX,Y \right\rangle_{\C^{n+1}}J\vec{x}$, where $\vec{x}$ is the position vector field (here $\nu(x)=-\vec{x}$ and $\xi(x)=2 J\vec{x}$).  Thus,
\begin{equation}\label{equa2}
    H_b(j)=-2n\ \vec{x}.
\end{equation}

In the sequel we will focus on maps $f:(M,\theta)\longrightarrow (N,h)$ that preserve lengths  in the horizontal directions as well as the orthogonality between $H(M)$ and $\xi$, that is,    $\forall X\in H(M)$,
\begin{equation*}
    |df(X)|_h=|X|_{G_{\theta}} \qquad \mbox{and}\qquad \langle df(X), df(\xi)\rangle_h =0,
\end{equation*}
which  also amounts to  
$f^*h = g_\theta +(\mu-1)\theta^2$
for some  nonnegative function $\mu$ on $M$.
For convenience, such a map  will be termed \emph{\textbf{semi-isometric}}.  
Notice that the dimension of the target manifold $N$ should be at least $2n$. When the dimension of $N$ is $2n$, then a semi-isometric map $f:(M,\theta)\longrightarrow (N,h)$ is noting but a Riemannian submersion satisfying $df(\xi)=0$. Important examples are  given by the standard projection from the Heisenberg group $\mathbb{H}^n$ to $\R^{2n}$ and the Hopf fibration ${\mathbb{S}^{2n+1}}\to{\mathbb{C}P^{n}}$.

\begin{lemma}\label{lem1}
Let $(M,\theta)$ be a strictly pseudoconvex CR manifold and let $(N,h)$ be a Riemannian manifold. If $f:(M,\theta)\longrightarrow (N,h)$ is a $C^2$  semi-isometric map, then the  form $\beta_f$ takes its values in the orthogonal complement of $df(H(M))$. In particular, the vector $H_b(f)$ is orthogonal to $df(H(M))$.
\end{lemma}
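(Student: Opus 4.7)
The plan is to prove the stronger pointwise statement: for every triple of horizontal vectors $X,Y,Z$ at a point of $M$, the scalar
\[
A(X,Y,Z) := \langle \beta_f(X,Y),\, df(Z)\rangle_h
\]
vanishes. Tracing the identity $A(X,Y,Z) = 0$ over $(X,Y)$ against a $G_\theta$-orthonormal frame of $H(M)$ then gives $\langle H_b(f), df(Z)\rangle_h = 0$ and yields the corollary. The strategy is to establish two algebraic symmetries of $A$, and to observe that any trilinear form which is symmetric in one adjacent pair and antisymmetric in the other must vanish.

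\textbf{Antisymmetry in $(Y,Z)$.} The semi-isometry hypothesis $|df(X)|_h=|X|_{G_\theta}$ polarizes to $\langle df(Y), df(Z)\rangle_h = G_\theta(Y,Z)$ for horizontal $Y,Z$. I will differentiate this identity along a horizontal vector field $X$. On the left the pullback connection $\nabla^f$ is $h$-metric; on the right I use that $\nabla G_\theta = 0$ (which follows at once from $\nabla J = 0$ and $\nabla d\theta = 0$, since $G_\theta(\cdot,\cdot)=-d\theta(J\cdot,\cdot)$) together with the fact that $\nabla$ preserves $H(M)$ (because $\nabla\theta=0$ forces $\theta(\nabla_X Y)=X\theta(Y)=0$ whenever $Y$ is horizontal). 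After cancelling the matched $\langle df(\nabla_X Y), df(Z)\rangle_h$ and $\langle df(Y), df(\nabla_X Z)\rangle_h$ terms, what remains is exactly $A(X,Y,Z) + A(X,Z,Y) = 0$.

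\textbf{Symmetry in $(X,Y)$.} Because the Levi--Civita connection on $(N,h)$ is torsion-free, the pullback connection satisfies $\nabla^f_X df(Y) - \nabla^f_Y df(X) = df([X,Y])$. On $(M,\theta)$ the Tanaka--Webster torsion formula on horizontal pairs gives $\nabla_X Y - \nabla_Y X - [X,Y] = -\theta([X,Y])\xi$. Subtracting these two identities,
\[
\beta_f(X,Y) - \beta_f(Y,X) = \theta([X,Y])\, df(\xi).
\]
The second semi-isometry condition, $\langle df(X), df(\xi)\rangle_h = 0$ for $X \in H(M)$, means that $df(\xi)$ is orthogonal to $df(Z)$ for every horizontal $Z$. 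Pairing the displayed equation with $df(Z)$ therefore gives $A(X,Y,Z) = A(Y,X,Z)$.

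\textbf{Conclusion.} Cycling through the two symmetries yields
\[
A(X,Y,Z) = A(Y,X,Z) = -A(Y,Z,X) = -A(Z,Y,X) = A(Z,X,Y) = A(X,Z,Y) = -A(X,Y,Z),
\]
so $A \equiv 0$, which is the stated orthogonality of $\beta_f(X,Y)$ to $df(H(M))$. The only real subtlety I anticipate is bookkeeping around the fact that $\nabla$ has nontrivial torsion: one must carefully distinguish $[X,Y]$ from its horizontal part $\nabla_X Y - \nabla_Y X$ in the symmetry step, and must verify that $\nabla G_\theta = 0$ so that no extraneous terms appear when differentiating the inner product. Once these structural facts are in place, the argument is purely algebraic and very short.
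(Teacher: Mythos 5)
Your proposal is correct and follows essentially the same route as the paper: both proofs derive the symmetry $\langle\beta_f(X,Y),df(Z)\rangle_h=\langle\beta_f(Y,X),df(Z)\rangle_h$ from the torsion comparison together with $\langle df(\xi),df(H(M))\rangle_h=0$, and the antisymmetry in the last two slots by differentiating $\langle df(Y),df(Z)\rangle_h=G_\theta(Y,Z)$ using $\nabla G_\theta=0$ and the parallelism of $H(M)$. The only (immaterial) difference is the final bookkeeping: you close with the standard six-step cyclic chase, whereas the paper specializes ($Y=X$, then $Z=X$) and polarizes.
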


\begin{proof} Let $X,Y$ and $Z$ be three horizontal vector fields. Since the Levi-Civita connection of $(N,h)$ is torsionless, one has $\nabla^f_Xdf(Y) - \nabla^f_Ydf(X)= df([X,Y])$. From  the properties of the torsion of the Tanaka-Webster connection $\nabla$,  one has $\nabla_XY-\nabla_YX=[X,Y]^H$. Thus, 
$$\beta_f(X,Y)-\beta_f(Y,X)= \theta ([X,Y])df(\xi).$$
Since $df(\xi)$ is orthogonal to $df(H(M))$, we deduce the following symmetry property:
\begin{equation}\label{equa3'}
 \langle \beta_f(X,Y), df(Z)\rangle_h =\langle \beta_f(Y,X), df(Z)\rangle_h.
\end{equation}
On the other hand, we have, 
\begin{equation}\label{equa3}
    Z\cdot\langle df(X),df(Y)\rangle_h=Z\cdot\langle  X,Y\rangle_{G_{\theta}}.
\end{equation}
Since $G_{\theta}$ is parallel with respect to the Tanaka-Webster connection $\nabla$ and $h$ is parallel with respect to the Levi-Civita connection $\nabla^h$, one gets
\begin{equation*}
    Z\cdot\langle df(X),df(Y)\rangle_h=\langle \nabla^f_Z df(X),df(Y)\rangle_h+\langle df(X),\nabla^f_Zdf(Y)\rangle_h
\end{equation*}
and
\begin{eqnarray*}
  Z\cdot\langle X,Y\rangle_{G_{\theta}} &=&  \langle \nabla_Z X,Y\rangle_{G_{\theta}}+\langle X,\nabla_Z Y\rangle_{G_{\theta}}\\
   &=& \langle df(\nabla_Z X),df(Y)\rangle_h+\langle df(X),df(\nabla_ZY)\rangle_h
\end{eqnarray*}
where the last equality comes from the fact that $\nabla_Z X$ and $\nabla_Z Y$ are horizontal. Replacing into \eqref{equa3} we obtain
\begin{equation*}
    \langle \nabla^f_Z df(X)-df(\nabla_Z X),df(Y)\rangle_h+\langle \nabla^f_Z df(Y)-df(\nabla_ZY),df(X)\rangle_h=0.
\end{equation*}
Therefore, $\forall\; X, Y, Z\in 
H(M),$
\begin{equation}\label{equa4}
    \langle \beta_f(Z,X),df(Y)\rangle_h+\langle \beta_f(Z,Y),df(X)\rangle_h=0.
\end{equation}
Taking $X=Y$ in \eqref{equa4} we obtain, $\forall\; X,Z\in 
H(M),$
\begin{equation}\label{equa4'}
    \langle \beta_f(Z,X),df(X)\rangle_h=0.
\end{equation}
Now, taking $Z=X$ in \eqref{equa4} and using \eqref{equa3'} and \eqref{equa4'}, we get, $\forall\; X,Y\in H(M),$
\begin{equation*}
    \langle \beta_f(X,X),df(Y)\rangle_h=0.
\end{equation*}
The symmetry property \eqref{equa3'} enables us to conclude.
\end{proof}

A direct consequence of Lemma \ref{lem1} is the following
\begin{corollary}\label{cor0}
If $f:(M,\theta)\longrightarrow (N,h)$ is a Riemannian submersion from   a strictly pseudoconvex CR manifold  $(M,\theta)$ to a Riemannian manifold $(N,h)$ with $df(\xi)=0$, then $\beta_f = 0$ and $H_b(f)=0$. 
\end{corollary}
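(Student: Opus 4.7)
My plan is to deduce the corollary directly from Lemma \ref{lem1}. The key observation is that, under the stated hypotheses, $f$ is automatically semi-isometric and, pointwise, $df$ sends $H(M)$ onto the full target tangent space, so the orthogonal complement appearing in the conclusion of Lemma \ref{lem1} collapses to zero.

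First I would check that $f$ is semi-isometric. Since $f:(M,g_\theta)\to (N,h)$ is a Riemannian submersion, $df_x$ is surjective onto $T_{f(x)}N$ and $\ker df_x$ has codimension $\dim N$; because $\xi\neq 0$ lies in $\ker df_x$, the natural case (the one actually needed in the examples mentioned, such as $\mathbb{H}^n\to\mathbb{R}^{2n}$ and $\mathbb{S}^{2n+1}\to\mathbb{CP}^n$) is $\dim N=2n$, in which $\ker df_x=\mathbb{R}\xi$. As $H(M)$ is the $g_\theta$-orthogonal complement of $\mathbb{R}\xi$, the Riemannian submersion property forces $df_x|_{H(M)}:(H(M),G_\theta)\to(T_{f(x)}N,h)$ to be a linear isometry. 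Hence $|df(X)|_h=|X|_{G_\theta}$ for every $X\in H(M)$ and $\langle df(X),df(\xi)\rangle_h=0$ trivially, so $f$ is semi-isometric in the sense introduced just before Lemma \ref{lem1}.

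Next I would invoke Lemma \ref{lem1}: for all $X,Y\in H(M)$, the vector $\beta_f(X,Y)\in T_{f(x)}N$ lies in the orthogonal complement of $df(H(M))$. But the linear isometry from the previous step yields $df(H(M))_x = T_{f(x)}N$ at every point, so this orthogonal complement is the zero subspace. Therefore $\beta_f\equiv 0$, and taking the $G_\theta$-trace gives $H_b(f)=\mathrm{trace}_{G_\theta}\beta_f=0$, as desired.

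I do not expect any real obstacle here; the argument is essentially a direct unwinding of Lemma \ref{lem1} once one makes the elementary dimension check showing that a Riemannian submersion with $df(\xi)=0$ restricts to a pointwise linear isometry from $H(M)$ onto $TN$. The only point to be slightly careful about is that the semi-isometry property relies on $\ker df=\mathbb{R}\xi$, which in turn comes from the surjectivity of $df$ together with $\xi\in\ker df$.
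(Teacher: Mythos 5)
Your argument is correct and coincides with the paper's intended (and unwritten) proof: the remark preceding Lemma \ref{lem1} already identifies Riemannian submersions with $df(\xi)=0$ over a $2n$-dimensional target as exactly the semi-isometric maps, and then Lemma \ref{lem1} forces $\beta_f$ to lie in the orthogonal complement of $df(H(M))=T_{f(x)}N$, which is zero. Your only extra care --- noting that the statement implicitly assumes $\dim N=2n$ so that $\ker df=\mathbb{R}\xi$ --- is consistent with how the corollary is used throughout the paper (e.g.\ in Theorem \ref{them1'}).
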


 \section{Eigenvalues of the sublaplacian and semi-isometric maps into Euclidean spaces}

Let $(M,\theta)$ be a strictly pseudoconvex CR manifold and let $\Omega$ be a bounded (relatively compact) domain of $M$. In the case where $M$ is a closed manifold, we allow $\Omega$ to be equal to the whole of $M$. We are interested in Schr\"{o}dinger-type operator $-\Delta_b +V$ where $V$ is a  function on $\Omega$. We assume in all the sequel that  the spectrum of   $-\Delta_b +V$ in $\Omega$,  with Dirichlet boundary conditions if  $\partial \Omega \neq \emptyset$, is discrete and bounded from below.  
We will always denote by $\{\lambda_j\}_{j\ge 1}$ the non decreasing sequence of eigenvalues of $-\Delta_b +V$ and by $\{u_j\}_{j\ge1}$ a complete orthonormal family of eigenfunctions in $\Omega$ with $(-\Delta_b +V) u_j=\lambda_j u_j$.

\begin{theorem}\label{them1}
Let $(M,\theta)$ be a strictly pseudoconvex CR manifold of real dimension $2n+1$  and let $f:(M,{\theta})\longrightarrow \mathbb{R}^m$ be a semi-isometric $C^2$ map. The sequence of eigenvalues $\{\lambda_j\}_{j\ge 1}$ of the Schr\"{o}dinger-type operator $-\Delta_b +V$ in a bounded domain $\Omega \subset M$, with Dirichlet boundary conditions if $\Omega\not= M$, satisfies for every  $k\geq 1$ and $p\in \R$,
  \begin{equation}\label{eqeuc1}
    \sum_{i=1}^k\big(\lambda_{k+1}-\lambda_i\big)^p \leq\frac {\max \{2,p\}}n \sum_{i=1}^k \big(\lambda_{k+1}-\lambda_i\big)^{p-1}\big(\lambda_i+ \frac 14 D_i \big) 
  \end{equation}
with $$D_i=\int_\Omega \left(|H_b(f)|_{\R^m}^2-4 V\right)u_i^2 \ \vartheta_\theta .$$ 
Moreover, if $ V$ is bounded below on $\Omega$, then for every  $k\geq 1$,
 \begin{equation}\label{eqeuc2}
    \lambda_{k+1}\leq (1+\frac{2}{n})\frac{1}{k}\sum_{i=1}^k\lambda_i+\frac{1}{2n}D_{\infty}      
\end{equation}
and
\begin{equation}\label{eqeuc3}
    \lambda_{k+1}\leq (1+\frac{2}{n})k^{\frac{1}{n}}\lambda_1+\frac 14\left((1+\frac{2}{n})k^{\frac{1}{n}}-1\right)D_{\infty}      
\end{equation}
with $D_{\infty}=\sup_\Omega\left(|H_b(f)|_{\R^m}^2-4 V\right)$.
\end{theorem}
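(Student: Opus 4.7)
The plan is to follow the Harrell--Stubbe / El~Soufi--Harrell--Ilias commutator strategy, adapted to the subelliptic setting. The main ingredients are (i) trial functions built from the coordinates of $f$, (ii) two spectral sum rules obtained by expanding in the $L^2$-eigenbasis, and (iii) an algebraic inequality combining them via a convexity estimate. Fix $k\ge 1$; for $i\in\{1,\dots,k\}$ and $\alpha\in\{1,\dots,m\}$ set $a_{ij}^\alpha:=\int_\Omega f_\alpha u_iu_j\,\vartheta_\theta$ and $\phi_{\alpha,i}:=f_\alpha u_i-\sum_{j=1}^k a_{ij}^\alpha u_j$, which is orthogonal to $u_1,\dots,u_k$. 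Applying the min-max principle for $\lambda_{k+1}$, together with the Leibniz identity $(-\Delta_b+V)(f_\alpha u_i)=\lambda_i f_\alpha u_i-\psi_{\alpha,i}$ (where $\psi_{\alpha,i}:=u_i\Delta_b f_\alpha+2G_\theta(\nabla^Hf_\alpha,\nabla^Hu_i)$) and integration by parts, yields
\[
(\lambda_{k+1}-\lambda_i)\|\phi_{\alpha,i}\|^2\le\int_\Omega u_i^2|\nabla^Hf_\alpha|^2\,\vartheta_\theta-\sum_{j=1}^k(\lambda_j-\lambda_i)(a_{ij}^\alpha)^2.
\]
Expanding $f_\alpha u_i=\sum_{j\ge 1}a_{ij}^\alpha u_j$ in the complete basis and noting that $\psi_{\alpha,i}=\sum_{j\ge 1}(\lambda_i-\lambda_j)a_{ij}^\alpha u_j$ produces the two spectral sum rules
\[
\int_\Omega u_i^2|\nabla^Hf_\alpha|^2\,\vartheta_\theta=\sum_{j\ge 1}(\lambda_j-\lambda_i)(a_{ij}^\alpha)^2,\qquad\|\psi_{\alpha,i}\|^2=\sum_{j\ge 1}(\lambda_j-\lambda_i)^2(a_{ij}^\alpha)^2.
\]

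Summation over $\alpha$ is where the geometry enters, and I expect this to be the most delicate step. Writing $S_{ij}:=\sum_\alpha(a_{ij}^\alpha)^2$, the semi-isometric hypothesis gives $\sum_\alpha|\nabla^Hf_\alpha|^2=2n$ pointwise (since $\sum_\alpha(Xf_\alpha)^2=|df(X)|_{\mathbb R^m}^2=|X|_{G_\theta}^2$ for horizontal $X$), whence $\sum_j(\lambda_j-\lambda_i)S_{ij}=2n$. Expanding $\|\psi_{\alpha,i}\|^2$ into its three constituent integrals, the cross term $\sum_\alpha\int_\Omega u_i\Delta_bf_\alpha\,G_\theta(\nabla^Hf_\alpha,\nabla^Hu_i)$ rewrites as $\int_\Omega u_i\langle H_b(f),df(\nabla^Hu_i)\rangle_{\mathbb R^m}$ and vanishes by Lemma~\ref{lem1} ($H_b(f)\perp df(H(M))$). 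Combining this with $\sum_\alpha(\Delta_bf_\alpha)^2=|H_b(f)|^2$ (see \eqref{equa1}), $\sum_\alpha G_\theta(\nabla^Hf_\alpha,\nabla^Hu_i)^2=|\nabla^Hu_i|^2$, and $\int_\Omega|\nabla^Hu_i|^2=\lambda_i-\int_\Omega Vu_i^2$, gives $\sum_j(\lambda_j-\lambda_i)^2S_{ij}=4\lambda_i+D_i$. Without the vanishing of the cross term, a spurious horizontal coupling would survive and spoil the universal bound; Lemma~\ref{lem1} is the key input that makes the CR setting work as cleanly as the Riemannian one.

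With both sum rules established, inequality \eqref{eqeuc1} is equivalent, after multiplication by $4n$ and substitution of the first sum rule on the left-hand side, to the purely algebraic statement
\[
2\sum_{i\le k,\,j\ge 1}t_i^p(\lambda_j-\lambda_i)S_{ij}\le c\sum_{i\le k,\,j\ge 1}t_i^{p-1}(\lambda_j-\lambda_i)^2S_{ij},\qquad t_i:=\lambda_{k+1}-\lambda_i,\ c:=\max\{2,p\}.
\]
I split the inner sum at $j=k$. For $j>k$ one has $\lambda_j-\lambda_i\ge t_i$, hence $t_i^p(\lambda_j-\lambda_i)\le t_i^{p-1}(\lambda_j-\lambda_i)^2$ pointwise, and the factor $c\ge 2$ absorbs the $2$. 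For $i,j\le k$, the symmetry $S_{ij}=S_{ji}$ lets me symmetrize both sides, reducing matters to the elementary convexity estimate
\[
\frac{x^p-y^p}{x-y}\le\frac{c}{2}\bigl(x^{p-1}+y^{p-1}\bigr),\qquad x,y\ge 0,
\]
which follows from the comparison of average slope with the average of endpoint slopes (convexity of $t\mapsto pt^{p-1}$) when $p\ge 2$, and from the identity $(x-y)(x^{p-1}+y^{p-1})-(x^p-y^p)=y^p\bigl(x/y-(x/y)^{p-1}\bigr)\ge 0$ when $1\le p\le 2$; the remaining values of $p$ are trivial or vacuous. This establishes \eqref{eqeuc1}.

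For \eqref{eqeuc2}, I specialize \eqref{eqeuc1} to $p=1$, bound $D_i\le D_\infty$, and rearrange. For \eqref{eqeuc3}, I rewrite \eqref{eqeuc2} for the shifted sequence $\mu_i:=\lambda_i+\tfrac{1}{4}D_\infty$, which absorbs the additive constant and yields $\mu_{k+1}\le(1+\tfrac{2}{n})\tfrac{1}{k}\sum_{i=1}^k\mu_i$; the Cheng--Yang recursion lemma then upgrades any such averaged bound to $\mu_{k+1}\le(1+\tfrac{2}{n})k^{1/n}\mu_1$, which, translated back to the $\lambda_i$, is exactly \eqref{eqeuc3}.
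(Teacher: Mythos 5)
Your treatment of \eqref{eqeuc1} and \eqref{eqeuc2} is sound. For \eqref{eqeuc1} you perform the same geometric computations as the paper (the commutator evaluations, the identity $\sum_\alpha|\nabla^Hf_\alpha|^2_{G_\theta}=2n$ from semi-isometry, and the vanishing of the cross term via Lemma \ref{lem1}), but instead of invoking the abstract Lemma \ref{lma2} of Ashbaugh--Hermi as a black box you re-derive it through the two spectral sum rules and the symmetrization/convexity estimate; this is essentially how that lemma is proved, so the route is the same at bottom, only more self-contained. (Your dismissal of ``the remaining values of $p$'' as trivial or vacuous is looser than it should be, though the paper delegates exactly this point to \cite{AshbaughHermi}.) Your derivation of \eqref{eqeuc2} from the $p=1$ case is in fact simpler than the paper's, which takes $p=2$, uses the Cauchy--Schwarz inequality $M_k^2\le Q_k$ and factors the resulting quadratic; both are correct.

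The genuine gap is in \eqref{eqeuc3}. The Cheng--Yang recursion does not ``upgrade any such averaged bound'' $\mu_{k+1}\le(1+\frac2n)\frac1k\sum_{i\le k}\mu_i$ to $\mu_{k+1}\le(1+\frac2n)k^{1/n}\mu_1$: its required input is the quadratic (Yang-type) inequality $\sum_{i=1}^k(\mu_{k+1}-\mu_i)^2\le\frac2n\sum_{i=1}^k(\mu_{k+1}-\mu_i)\mu_i$. The averaged inequality alone is strictly weaker and cannot produce the exponent $1/n$: the extremal sequence defined by equality, $\mu_{k+1}=(1+\frac2n)\frac1k\sum_{i\le k}\mu_i$, has partial sums $S_k$ growing like $k^{1+2/n}$, hence $\mu_{k+1}\sim k^{2/n}\mu_1$, the square of the claimed bound. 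The repair is immediate and is exactly what the paper does: take $p=2$ in \eqref{eqeuc1}, replace $D_i$ by $D_\infty$, pass to the shifted eigenvalues $\overline{\lambda}_i=\lambda_i+\frac14 D_\infty$ to obtain the Yang-type inequality above, and only then apply \cite[Theorem 2.1 and Corollary 2.1]{ChengYang}. Since you have already established \eqref{eqeuc1} for $p=2$, nothing else is missing, but as written your justification of \eqref{eqeuc3} does not go through.
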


Applying this result to the standard CR sphere  whose standard embedding $j:\mathbb{S}^{2n+1}\to \mathbb{C}^{n+1}$ satisfies   $|H_b(j)|^2_{\mathbb{C}^{n+1}}=4n^2$ (see \eqref{equa2}), we get the following 
\begin{corollary}\label{sphere}
Let $\Omega$ be a domain in  the standard CR sphere $\mathbb{S}^{2n+1}\subset \mathbb{C}^{n+1}$. 
The eigenvalues of the operator $-\Delta_b+V$ in $\Omega$, with Dirichlet boundary conditions if $ \Omega\not= \mathbb{S}^{2n+1}$, satisfy, for every  $k\geq 1$ and $p\in\R$,
  \begin{equation*}
    \sum_{i=1}^k\big(\lambda_{k+1}-\lambda_i\big)^p \leq \frac {\max \{2,p\}}n \sum_{i=1}^k \big(\lambda_{k+1}-\lambda_i\big)^{p-1}\big(\lambda_i+n^2-T_i\big)
  \end{equation*}
  with $T_i=\int_\Omega V u_i^2\vartheta_\theta$. Moreover,  if $V$ is bounded below on $\Omega$, then, for every  $k\geq 1$, 
$$\lambda_{k+1}\leq (1+\frac{2}{n})\frac{1}{k}\sum_{i=1}^k \lambda_i+2n-\frac 2n \inf_{\Omega}V$$
and
 $$\lambda_{k+1}\leq (1+\frac{2}{n})k^{\frac{1}{n}}\lambda_1+ C(n,k,V)$$
 with $C(n,k,V)=\left((1+\frac{2}{n})k^{\frac{1}{n}}-1\right)\left(n^2 - \inf_{\Omega} V\right).$

\end{corollary}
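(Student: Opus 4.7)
The plan is to derive Corollary \ref{sphere} as a direct specialization of Theorem \ref{them1} applied to the standard embedding $j:\mathbb{S}^{2n+1}\hookrightarrow\mathbb{C}^{n+1}$, with $M=\mathbb{S}^{2n+1}$ endowed with its standard pseudo-Hermitian structure. The only nontrivial input needed is the computation of $|H_b(j)|^2$, which is already recorded in the paper as \eqref{equa2}.

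First I would check that $j$ is indeed semi-isometric in the sense of Section 2. By construction, the Levi form $G_\theta$ of the standard CR structure on $\mathbb{S}^{2n+1}$ is the restriction of the Euclidean inner product of $\mathbb{C}^{n+1}$ to the horizontal bundle $H(\mathbb{S}^{2n+1})$, so $|dj(X)|_{\mathbb{C}^{n+1}}=|X|_{G_\theta}$ for every horizontal $X$. Moreover $dj(\xi)=\xi=2J\vec{x}$ and, for $X\in H_x(\mathbb{S}^{2n+1})$ (which by definition is orthogonal to $J\vec{x}$ in $\mathbb{C}^{n+1}$), $\langle dj(X),dj(\xi)\rangle_{\mathbb{C}^{n+1}}=2\langle X,J\vec{x}\rangle_{\mathbb{C}^{n+1}}=0$. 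This confirms that $j$ is a $C^2$ semi-isometric map and Theorem \ref{them1} applies on any bounded domain $\Omega\subset\mathbb{S}^{2n+1}$ (and on the whole sphere when it is closed).

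Next I would plug the value $|H_b(j)|^2_{\mathbb{C}^{n+1}}=4n^2$ coming from \eqref{equa2} into the quantities $D_i$ and $D_\infty$ appearing in Theorem \ref{them1}. Using the $L^2$-normalization $\int_\Omega u_i^2\,\vartheta_\theta=1$, we get
\begin{equation*}
\tfrac14 D_i=\tfrac14\int_\Omega(4n^2-4V)u_i^2\,\vartheta_\theta=n^2-T_i,
\end{equation*}
which, substituted into \eqref{eqeuc1}, yields the first inequality of the corollary. Similarly,
\begin{equation*}
D_\infty=\sup_\Omega(4n^2-4V)=4n^2-4\inf_\Omega V,
\end{equation*}
so $\tfrac{1}{2n}D_\infty=2n-\tfrac{2}{n}\inf_\Omega V$ and $\tfrac14 D_\infty=n^2-\inf_\Omega V$, which when inserted into \eqref{eqeuc2} and \eqref{eqeuc3} give the two remaining inequalities with the claimed constant $C(n,k,V)=\big((1+\tfrac{2}{n})k^{1/n}-1\big)(n^2-\inf_\Omega V)$.

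The argument is essentially a substitution, so there is no serious obstacle; the only delicate point is sanity-checking the semi-isometric condition for $j$ against the normalization convention for $\theta$ (the factor of $\tfrac12$ mentioned after the definition of $G_\theta$ in Section 2), making sure the Reeb field $\xi=2J\vec{x}$ and the formula $H_b(j)=-2n\vec{x}$ are coherent with this convention, so that $|H_b(j)|^2=4n^2$ really holds pointwise on $\mathbb{S}^{2n+1}$.
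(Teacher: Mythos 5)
Your proposal is correct and follows exactly the paper's route: the corollary is obtained by applying Theorem \ref{them1} to the standard embedding $j:\mathbb{S}^{2n+1}\hookrightarrow\mathbb{C}^{n+1}$ and substituting $|H_b(j)|^2_{\mathbb{C}^{n+1}}=4n^2$ from \eqref{equa2} into $D_i$ and $D_\infty$. Your added verification that $j$ is semi-isometric (consistency of the Reeb field $\xi=2J\vec{x}$ and the normalization of $\theta$) is a sensible sanity check that the paper leaves implicit.
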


Theorem \ref{them1} also applies to the Heisenberg group ${\mathbb{H}^{n}}$ endowed with its standard CR structure. The corresponding sub-Laplacian is nothing but the operator $\Delta_{\mathbb{H}^{n}}=\frac 14\sum_{j\le n}(X_j^2+Y_j^2)$ (see  section 5 for details).  Since the standard projection ${\mathbb{H}^{n}}\to \R^{2n}$ is semi-isometric (up to a dilation, see \eqref{equa9'} below) with zero Levi-tension (see Corollary \ref{cor0}), Theorem \ref{them1} leads to the following corollary which improves the results by Niu-Zhang \cite{NiuZhang} and El Soufi-Harrell-Ilias \cite{SHI}.

\begin{corollary}\label{cor2}
Let $\Omega$ be a domain in  the Heisenberg group ${\mathbb{H}^{n}}$. 
The eigenvalues of the operator $-\Delta_b+V$ in $\Omega$, with Dirichlet boundary conditions, satisfy, for every  $k\geq 1$ and $p\in\R$,
  \begin{equation*}
    \sum_{i=1}^k\big(\lambda_{k+1}-\lambda_i\big)^p \leq \frac {\max \{2,p\}}n \sum_{i=1}^k \big(\lambda_{k+1}-\lambda_i\big)^{p-1}\big(\lambda_i -T_i\big)
  \end{equation*}
  with $T_i=\int_\Omega V u_i^2\vartheta_\theta$. Moreover, if $V$ is bounded below on $\Omega$, then, for every  $k\geq 1$, 
$$\lambda_{k+1}\leq (1+\frac{2}{n})\frac{1}{k}\sum_{i=1}^k \lambda_i-\frac 2n \inf_{\Omega}V$$
and
 $$\lambda_{k+1}\leq (1+\frac{2}{n})k^{\frac{1}{n}}\lambda_1- \left((1+\frac{2}{n})k^{\frac{1}{n}}-1\right)\inf_\Omega V.$$

\end{corollary}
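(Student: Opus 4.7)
The plan is to deduce Corollary \ref{cor2} directly from Theorem \ref{them1} by exhibiting a semi-isometric map $f:\mathbb{H}^n\to\R^{2n}$ with $H_b(f)\equiv 0$, so that the data $D_i$ and $D_\infty$ appearing in Theorem \ref{them1} reduce to expressions involving $V$ alone. Concretely, I take $f$ to be (a suitable rescaling of) the standard Carnot-type projection $\pi:\mathbb{H}^n\to\R^{2n}$ that forgets the central coordinate; once the constant is chosen correctly this $\pi$ will be a Riemannian submersion between $(H(\mathbb{H}^n),G_\theta)$ and $\R^{2n}$ with $d\pi(\xi)=0$.

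The key step is the verification that $H_b(\pi)=0$. With the standard pseudo-Hermitian structure on $\mathbb{H}^n$ recalled in Section 5, the horizontal frame $\{X_j,Y_j\}_{j\le n}$ descends via $\pi$ to a constant frame of $\R^{2n}$, so after the appropriate rescaling the map $\pi$ preserves horizontal lengths and annihilates the Reeb vector field $\xi$. This means $\pi$ satisfies the semi-isometry hypothesis with the target of the same dimension $2n$ as $H(\mathbb{H}^n)$, and it falls under the scope of Corollary \ref{cor0}: therefore $\beta_\pi=0$ and in particular $H_b(\pi)=0$. Equivalently, one can notice that the components of $\pi$ are the coordinate functions $x_j,y_j$, which are annihilated by $\Delta_{\mathbb{H}^n}$, and invoke \eqref{equa1}.

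With $|H_b(\pi)|^2_{\R^{2n}}=0$ in hand, the quantities of Theorem \ref{them1} simplify to
\begin{equation*}
D_i=\int_\Omega(-4V)u_i^2\,\vartheta_\theta=-4T_i\qquad\text{and}\qquad D_\infty=-4\inf_\Omega V.
\end{equation*}
Substituting these values into \eqref{eqeuc1}, \eqref{eqeuc2} and \eqref{eqeuc3} respectively yields the three inequalities of Corollary \ref{cor2} with no further work. The main (and really only) non-routine point in this scheme is the verification that the projection $\pi$, after the correct normalization, is indeed a semi-isometric map sending $\xi$ to zero, so that Corollary \ref{cor0} is applicable; this is a direct consequence of the explicit description of the Heisenberg horizontal distribution given in Section 5.
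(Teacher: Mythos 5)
Your proposal is correct and follows essentially the same route as the paper: the authors likewise obtain Corollary \ref{cor2} by applying Theorem \ref{them1} to the standard projection $\mathbb{H}^n\to\R^{2n}$, which they note is semi-isometric up to a dilation (by \eqref{equa9'}, the right normalization is $2\pi$) and has vanishing Levi tension by Corollary \ref{cor0}, so that $D_i=-4T_i$ and $D_\infty=-4\inf_\Omega V$. Your alternative check via $\Delta_b x_j=\Delta_b y_j=0$ and \eqref{equa1} is also sound and consistent with the paper's remark that Theorem \ref{them2} applied to the identity of $\mathbb{H}^n$ gives the same result.
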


The proof of Theorem \ref{them1} relies on a general result of algebraic nature using commutators. The use of this approach in obtaining bounds for eigenvalues is now fairly prevalent. Pioneering works in this direction are due to Harrell, alone or with collaborators (see \cite{SHI, Harrell1, Harrell2, HarrellMichel, HarrellStubbe1, HarrellStubbe2}). For our purpose, we will use 
the following version that can be found in a recent paper by
Ashbaugh and Hermi \cite{AshbaughHermi} (see inequality $(26)$ of Corollary 3 and inequality $(46)$ of Corollary 8 in \cite{AshbaughHermi}).

\begin{lemma}\label{lma2} Let $A : \mathcal{D} \subset \mathcal{H} \to\mathcal{H}$ be a self-adjoint operator defined
on a dense domain $\mathcal{D}$ which is semibounded below and has a discrete
spectrum $\lambda_1\le \lambda_2\cdots  \le\lambda_i \le \cdots$. Let $B: A(\mathcal{D}) \to\mathcal{H}$
be a symmetric operator which leaves $\mathcal{D}$ invariant. Denoting by 
$\{u_i\}_{i\ge1}$ a complete orthonormal family of eigenvectors of $A$ with $A  u_i=\lambda_i u_i$, we have,
for every   $k\ge 1$ and $p\in\R$, 
  \begin{equation*}
    \sum_{i=1}^k\big(\lambda_{k+1}-\lambda_i\big)^p\big\langle [A,B]u_i, B{u_i}\big\rangle\leq \max\{1,\frac p2\}\sum_{i=1}^k \big(\lambda_{k+1}-\lambda_i\big)^{p-1}\|[A,B]u_i\|^2.
  \end{equation*}

\end{lemma}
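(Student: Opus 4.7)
The strategy is to expand both sides of the inequality in the orthonormal eigenbasis $\{u_j\}_{j\ge 1}$ of $A$ and reduce the claim to a purely numerical inequality about pairs of real numbers; no min-max argument beyond the mere ordering of the eigenvalues will be needed. I would start by introducing the nonnegative coefficients $c_{ij}:=|\langle Bu_i,u_j\rangle|^2$, which are symmetric, $c_{ij}=c_{ji}$, by the symmetry of $B$ and the reality of the spectrum of $A$. The eigenvalue equation $Au_j=\lambda_j u_j$ combined with self-adjointness of $A$ yields the key identity $\langle [A,B]u_i,u_j\rangle=(\lambda_j-\lambda_i)\langle Bu_i,u_j\rangle$, so Parseval's identity gives
\begin{equation*}
\langle [A,B]u_i,Bu_i\rangle=\sum_{j\ge 1}(\lambda_j-\lambda_i)\,c_{ij},\qquad \|[A,B]u_i\|^2=\sum_{j\ge 1}(\lambda_j-\lambda_i)^2\,c_{ij}.
\end{equation*}

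Next I would split each inner sum over $j$ into the low block $j\le k$ and the high tail $j\ge k+1$. For $j\ge k+1$, the ordering $\lambda_j\ge\lambda_{k+1}$ gives the termwise bound $(\lambda_{k+1}-\lambda_i)^p(\lambda_j-\lambda_i)\le(\lambda_{k+1}-\lambda_i)^{p-1}(\lambda_j-\lambda_i)^2$, so the high-tail contribution to the LHS is dominated by the corresponding contribution to the RHS with constant $1\le\max\{1,p/2\}$. For the low block $i,j\le k$, the symmetry $c_{ij}=c_{ji}$ allows the antisymmetrization
\begin{equation*}
\sum_{i,j=1}^k(\lambda_{k+1}-\lambda_i)^p(\lambda_j-\lambda_i)\,c_{ij}=\frac{1}{2}\sum_{i,j=1}^k\bigl[(\lambda_{k+1}-\lambda_i)^p-(\lambda_{k+1}-\lambda_j)^p\bigr](\lambda_j-\lambda_i)\,c_{ij},
\end{equation*}
together with the analogous symmetrization of the corresponding block of the RHS (with $(\lambda_j-\lambda_i)^2$ in place of $(\lambda_j-\lambda_i)$ and a $+$ instead of a $-$ in the bracket). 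Setting $a=\lambda_{k+1}-\lambda_i$ and $b=\lambda_{k+1}-\lambda_j$, this reduces the matter to the pointwise numerical inequality
\begin{equation*}
\frac{a^p-b^p}{a-b}\le \max\{1,p/2\}\,(a^{p-1}+b^{p-1}),\qquad a,b>0,\ a\ne b.
\end{equation*}

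Finally I would establish this pointwise inequality in two regimes. When $p\le 2$, the elementary identity
\begin{equation*}
(a^p-b^p)-(a-b)(a^{p-1}+b^{p-1})=ab\bigl(a^{p-2}-b^{p-2}\bigr)
\end{equation*}
has the correct sign, since $x\mapsto x^{p-2}$ is non-increasing for $p\le 2$ (WLOG $a>b$); this gives the bound with constant $1$. When $p\ge 2$, the integral representation $(a^p-b^p)/(a-b)=p\int_0^1(ta+(1-t)b)^{p-1}\,dt$ combined with the convexity of $x\mapsto x^{p-1}$ and Jensen's inequality gives $(a^p-b^p)/(a-b)\le (p/2)(a^{p-1}+b^{p-1})$. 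The main technical hurdle is precisely this pointwise inequality: the sharp constant $\max\{1,p/2\}$ reflects the change of convexity of $x\mapsto x^{p-1}$ at $p=2$, so the two regimes must be handled by distinct elementary arguments, while the overall structure of the proof is a mostly mechanical diagonalization and symmetrization in the eigenbasis.
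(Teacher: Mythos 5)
Your proof is correct. The paper does not actually prove Lemma \ref{lma2} --- it is quoted verbatim from Ashbaugh and Hermi \cite{AshbaughHermi} --- and your argument (expansion in the eigenbasis via the identity $\langle [A,B]u_i,u_j\rangle=(\lambda_j-\lambda_i)\langle Bu_i,u_j\rangle$, splitting into the low block and the high tail, symmetrization using $c_{ij}=c_{ji}$, and the pointwise bound $\frac{a^p-b^p}{a-b}\le\max\{1,\frac p2\}(a^{p-1}+b^{p-1})$ proved separately for $p\le 2$ and $p\ge 2$) is essentially the standard proof given in that reference, so there is nothing to add.
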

\begin{proof}[Proof of Theorem \ref{them1}]
Let $f:(M,\theta)\to \R^m$ be a semi-isometric map and let $f_1,...,f_m$ be its Euclidean  components.  For each  $\alpha=1,...,m$, we denote by $f_\alpha$ the multiplication operator naturally
associated with $f_\alpha$. Let us start by the calculation of $\big\langle [-\Delta_b+V,f_\alpha]u_i,f_\alpha u_i\big\rangle_{L^2}$ and $\|[-\Delta_b+V,f_\alpha]u_i\|^2_{L^2}$. One has, 
 \begin{eqnarray*}
     [-\Delta_b+V,f_\alpha]u_i &=& -\Delta_b(f_\alpha u_i)+f_\alpha (\Delta_b u_i) \\
      &=&  -(\Delta_b f_\alpha)u_i - 2 \langle \nabla^H f_\alpha,\nabla^H u_i\rangle_{G_{\theta}}.
   \end{eqnarray*}
Thus,
\begin{equation}\label{equa5}
    \big\langle [-\Delta_b+V,f_\alpha]u_i,f_\alpha u_i\big\rangle_{L^2}=-\int_\Omega f_\alpha(\Delta_bf_\alpha)u_i^2-\frac{1}{2}\int_\Omega \langle \nabla^H f_\alpha^2,\nabla^H u_i^2\rangle_{G_{\theta}}.
\end{equation}
Here and in  the sequel, all the integrals over $M$ are calculated  with respect to the volume form $\vartheta_\theta$ or, equivalently,  the Riemannian volume element induced by the Webster metric $g_\theta$. The integration over the eventual boundary is calculated with respect to the Riemannian metric induced on $\partial \Omega$ by the Webster metric $g_\theta$.
Integration by parts leads to (see \eqref{equa0})
\begin{eqnarray*}
   \int_\Omega \langle \nabla^H f_\alpha^2,\nabla^H u_i^2\rangle_{G_{\theta}} &=& -\int_\Omega (\Delta_bf_\alpha^2)u_i^2+\int_{\partial M} u_i^2 \langle \nabla^H f_\alpha^2, \nu\rangle_{g_{\theta}} 
\end{eqnarray*}
where $\nu$ is the unit normal vector to the boundary with respect to the Webster metric $g_\theta$. Since $u_i$ vanishes on $\partial \Omega$ when $\partial \Omega\neq \emptyset$, we get
\begin{eqnarray*}
   \int_\Omega \langle \nabla^H f_\alpha^2,\nabla^H u_i^2\rangle_{G_{\theta}} 
   &=&  -\int_\Omega (\Delta_bf_\alpha^2)u_i^2\\
   &=& -2\left[\int_\Omega f_\alpha (\Delta_b f_\alpha) u_i^2+\int_\Omega |\nabla^H f_\alpha|_{G_{\theta}}^2 u_i^2\right].
\end{eqnarray*}
Substituting in \eqref{equa5} we obtain
\begin{equation*}
    \langle [-\Delta_b+V,f_\alpha]u_i,f_\alpha u_i\rangle_{L^2}=\int_\Omega |\nabla^H f_\alpha|_{G_{\theta}}^2 u_i^2.
\end{equation*}
Thus
\begin{equation*}
    \sum_{\alpha=1}^m\langle  [-\Delta_b+V,f_\alpha] u_i,f_\alpha  u_i\rangle_{L^2}= \sum_{\alpha=1}^m\int_{\Omega}|\nabla^H  f_\alpha|_{G_{\theta}}^2u_i^2.
\end{equation*}
Now, since $f$ preserves the Levi-form, one has with respect to a $G_{\theta}$-orthonormal frame $\{e_i\}$ of $H_p(M)$,
\begin{eqnarray*}
   \sum_{\alpha=1}^m|\nabla^H  f_\alpha|^2_{G_{\theta}}&=& \sum_{\alpha=1}^m\sum_{i=1}^{2n}\langle \nabla^Hf_\alpha,e_i\rangle_{G_{\theta}}^2= \sum_{i=1}^{2n}\sum_{\alpha=1}^m\langle \nabla f_\alpha,e_i\rangle_{G_{\theta}}^2 \\
   &=& \sum_{i=1}^{2n}|df(e_i)|^2_{\mathbb{R}^m}= \sum_{i=1}^{2n}|e_i|^2_{G_{\theta}}=2n. 
\end{eqnarray*}
Therefore,
\begin{eqnarray}\label{equa6}
   \sum_{\alpha=1}^m\langle  [-\Delta_b+V,f_\alpha] u_i,f_\alpha  u_i\rangle_{L^2}= 2n\int_{\Omega}u_i^2 =2n.
 \end{eqnarray}
On the other hand, we have
\begin{eqnarray*}
  \|[-\Delta_b+V,f_\alpha]u_i\|^2_{L^2} &=&  \int_{\Omega}\Big((\Delta_b f_\alpha)u_i+2\langle \nabla^H  f_\alpha,\nabla^H  u_i\rangle_{G_{\theta}}\Big)^2 \\
   &=&  \int_{\Omega}(\Delta_b f_\alpha)^2 u_i^2+4\int_{\Omega}\langle \nabla^H  f_\alpha,\nabla ^H  u_i\rangle_{G_{\theta}}^2\\
   &+& 2 \int_{\Omega}(\Delta_b f_\alpha)\langle \nabla^H  f_\alpha,\nabla^H  u_i^2\rangle_{G_{\theta}}.
\end{eqnarray*}
Using \eqref{equa1}, we get
\begin{equation*}
    \sum_{\alpha=1}^m\int_\Omega(\Delta_b f_\alpha)^2 u_i^2=\int_\Omega |H_b(f)|_{\mathbb{R}^m}^2 u_i^2.
\end{equation*}
Using the isometry property of $f$ with respect  to horizontal directions, we get
\begin{eqnarray*}
  \sum_{\alpha=1}^m\langle \nabla^H f_\alpha,\nabla^H u_i\rangle_{G_{\theta}}^2 &=&  \sum_{\alpha=1}^m \langle \nabla f_\alpha,\nabla^H u_i\rangle_{G_{\theta}}^2= \sum_{\alpha=1}^m |df_\alpha (\nabla^H u_i)|_{\mathbb{R}^m}^2 \\
  &=&  |df(\nabla^H u_i)|_{\mathbb{R}^m}^2= |\nabla^H u_i|_{G_{\theta}}^2.
\end{eqnarray*}
Thus,
\begin{equation*}
   \sum_{\alpha=1}^m\int_\Omega\langle \nabla^H  f_\alpha,\nabla ^H  u_i\rangle_{G_{\theta}}^2=\int_\Omega |\nabla ^H  u_i|_{G_{\theta}}^2=\lambda_i-\int_\Omega Vu_i^2.
\end{equation*}
Finally, denoting by $\{E_\alpha\}$ the standard basis of $\mathbb{R}^m$ and using Lemma \ref{lem1}, we get,
\begin{eqnarray*}
  \sum_\alpha^m\int_\Omega\Delta_b f_\alpha\langle \nabla^H f_\alpha, \nabla^H u_i^2\rangle_{G_{\theta}}  &=& \langle \sum_\alpha^m\Delta_b f_\alpha E_\alpha,\sum_\alpha^m\langle \nabla f_\alpha,\nabla^H u_i^2\rangle_{G_{\theta}} E_\alpha \rangle_{\mathbb{R}^m}\\
   &=& \langle H_b(f),df(\nabla^H u_i^2)\rangle_{\mathbb{R}^m}=0.
\end{eqnarray*}
Using all these facts, we get
\begin{equation}\label{equa7}
    \sum_{\alpha=1}^m \|[-\Delta_b+V,f_\alpha]u_i\|^2_{L^2}=4\left(\lambda_i-\int_\Omega Vu_i^2\right)+\int_\Omega  |H_b(f)|_{\mathbb{R}^m}^2 u_i^2.
\end{equation}
Applying  Lemma \ref{lma2}  with $A=-\Delta_b+V$ and $B=f_\alpha$, summing up with respect to $\alpha=1,...,m,$ and  using \eqref{equa6} and \eqref{equa7}, we get the inequality \eqref{eqeuc1}.

\medskip

To prove the inequality \eqref{eqeuc2}, we consider the  quadratic relation that we derive from \eqref{eqeuc1} after replacing $p$ by $2$ and $D_i$ by $D_{\infty}$, that is, $\forall\;k\geq1,$
\begin{equation}\label{equa7'}
   \sum_{i=1}^k\big(\lambda_{k+1}-\lambda_i\big)^2 \leq\frac{2}{n}\sum_{i=1}^k \big(\lambda_{k+1}-\lambda_i\big)\big(\lambda_i+\frac{D_{\infty}}{4} \big)
\end{equation}
which leads to
\begin{eqnarray*}   \lambda_{k+1}^2-\lambda_{k+1}\Big((2+\frac{2}{n})M_k+\frac{1}{2n}D_{\infty}\Big)
+(1+\frac{2}{n})Q_k+\frac{1}{2n}D_{\infty} M_k\leq 0
\end{eqnarray*}
with $M_k=\frac{1}{k}\sum_{i=1}^k\lambda_{i}$ and $Q_k=\frac{1}{k}\sum_{i=1}^k\lambda_i^2$.
Using Cauchy-Schwarz inequality $M_k^2\leq Q_k$, we get
\begin{eqnarray*}   \lambda_{k+1}^2-\lambda_{k+1}\Big((2+\frac{2}{n})M_k+\frac{1}{2n}D_{\infty}\Big)
+(1+\frac{2}{n})M^2_k+\frac{1}{2n}D_{\infty} M_k\leq 0
\end{eqnarray*}
which can also be written as follows:
$$\left(\lambda_{k+1}-M_k\right)\left(\lambda_{k+1}-(1+\frac{2}{n})M_k -\frac{1}{2n}D_{\infty}\right)\le 0.$$
Since $\lambda_{k+1}-M_k$ is clearly nonnegative, we get $\lambda_{k+1}\leq (1+\frac{2}{n})M_k +\frac{1}{2n}D_{\infty}$ which proves \eqref{eqeuc2}.

\medskip

Now, if we set  $\overline{\lambda}_i:=\lambda_i+\frac{1}{4} D_{\infty}$, then the inequality  \eqref{equa7'} reads
 \begin{equation*} \sum_1^k(\overline{\lambda}_{k+1}-\overline{\lambda}_i)^2\leq \frac 2 n \sum_{1}^k(\overline{\lambda}_{k+1}-\overline{\lambda}_i)\overline{\lambda}_i.
   \end{equation*}
Following Cheng and Yang's argument \cite[Theorem 2.1 and Corollary 2.1]{ChengYang},  we obtain
$$\overline{\lambda}_{k+1}\leq \left(1+\frac{2}{n}\right) \overline{\lambda}_1k^{\frac{1}{n}}$$
 which gives immediately the last inequality of the theorem.
\end{proof}

    \section{Applications to Riemannian submersions over submanifolds of the Euclidean space}
Let $(M,\theta)$ be a strictly pseudoconvex CR manifold and let $f:(M,\theta)\to N$ be a Riemannian submersion over a 
Riemannian manifold $N$ of dimension $2n$. The manifold $N$ admits infinitely many isometric immersions into  Euclidean spaces.  For every integer $m\ge 2n$ we denote by  
$\mathcal{I} (N,\R^m)$ the set of all $C^2$ isometric immersions from $N$ to the $m$-dimensional Euclidean space $\mathbb{R}^m$. Thanks to the Nash embedding theorem, the set $\cup_{m\in\N}\mathcal{I} (N,\R^m)$  is never empty,  which motivates the introduction of the following invariant :
$$H^{euc}(N) = \inf_{\phi\in \cup_{m\in\N}\mathcal{I} (N,\R^m)} \|H(\phi)\|_\infty $$
where $H(\phi)$ stands for the mean curvature vector field of $\phi$.  

\begin{theorem}\label{them1'}
Let $(M,\theta)$ be a  strictly pseudoconvex CR manifold of real dimension $2n+1$ and let $f:(M,\theta)\to N$ be a Riemannian submersion over a
Riemannian manifold of dimension $2n$ such that $df(\xi)=0$. The eigenvalues of the operator $-\Delta_b+V$ in a bounded domain $\Omega \subset M$, with Dirichlet boundary conditions if $\Omega\not= M$, satisfy for every  $k\geq 1$ and $p\in\R$,
  \begin{equation}\label{eqsub1}
    \sum_{i=1}^k\big(\lambda_{k+1}-\lambda_i\big)^p \leq\frac {\max\{2,p\}}n \sum_{i=1}^k \big(\lambda_{k+1}-\lambda_i\big)^{p-1}\big(\lambda_i+ \frac 14 H^{euc}(N)^2 -T_i\big) 
  \end{equation}
  with $T_i=\int_\Omega Vu_i^2\vartheta_\theta$. Moreover, if $V$ is bounded below on $\Omega$, then, for every  $k\geq 1$,
  \begin{equation}\label{eqsub2}
  \lambda_{k+1}\leq (1+\frac{2}{n})\frac{1}{k}\sum_{i=1}^k\lambda_i+\frac{1}{2n}H^{euc}(N)^2 -\frac 2n\inf_\Omega V
  \end{equation}
and 
  \begin{equation}\label{eqsub3}
   \lambda_{k+1}\leq (1+\frac{2}{n})k^{\frac{1}{n}}\lambda_1+ C
\end{equation}
with $C=\left((1+\frac{2}{n})k^{\frac{1}{n}}-1\right)\left(\frac 14H^{euc}(N)^2 - \inf_\Omega V\right)$.

\end{theorem}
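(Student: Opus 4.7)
The plan is to reduce the statement to Theorem \ref{them1} by lifting any isometric immersion $\phi\in\mathcal{I}(N,\R^m)$ to the composition $F:=\phi\circ f:M\to\R^m$ and then taking an infimum over $\phi$. First I would check that $F$ is semi-isometric: because $\dim M=\dim N+1$ and $df(\xi)=0$, the fibers of $f$ are one-dimensional and spanned by $\xi$, so the $f$-horizontal distribution coincides with $H(M)$. Consequently, for $X\in H(M)$ one gets $|dF(X)|_{\R^m}=|df(X)|_N=|X|_{G_\theta}$, while $dF(\xi)=d\phi(df(\xi))=0$. Thus the hypotheses of Theorem \ref{them1} are met by $F$.

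The central step is to identify $H_b(F)$ in terms of the extrinsic geometry of $\phi$. For horizontal vector fields $X,Y$, the identity $\nabla_XY=(\nabla^{g_\theta}_XY)^H$ combined with $dF(\xi)=0$ shows that $\beta_F(X,Y)=B_F(X,Y)$; the $\xi$-correction linking the Tanaka-Webster and Levi-Civita connections disappears after applying $dF$. The standard chain rule for second fundamental forms of a composition then gives
\[
B_F(X,Y)=(\nabla d\phi)(df(X),df(Y))+d\phi\bigl(B_f(X,Y)\bigr).
\]
Corollary \ref{cor0} applied to $f$ provides $\beta_f=0$, which, again because $df(\xi)=0$, amounts to $B_f\equiv 0$ on horizontal pairs. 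Hence $\beta_F(X,Y)=B_\phi(df(X),df(Y))$, where $B_\phi$ is the second fundamental form of $\phi$. Since $\{df(X_i)\}_{i=1}^{2n}$ is a $g_N$-orthonormal frame of $TN$ whenever $\{X_i\}$ is a $G_\theta$-orthonormal frame of $H(M)$, tracing yields
\[
H_b(F)(x)=\sum_{i=1}^{2n}B_\phi\bigl(df(X_i),df(X_i)\bigr)=H(\phi)\bigl(f(x)\bigr),
\]
so that $|H_b(F)|_{\R^m}\leq\|H(\phi)\|_\infty$ pointwise on $\Omega$.

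To conclude, I would insert this estimate into the quantities $D_i$ and $D_\infty$ appearing in Theorem \ref{them1}, which converts \eqref{eqeuc1}, \eqref{eqeuc2} and \eqref{eqeuc3} into the inequalities \eqref{eqsub1}, \eqref{eqsub2} and \eqref{eqsub3} with $\|H(\phi)\|_\infty^2$ in place of $H^{euc}(N)^2$. Because the weights $(\lambda_{k+1}-\lambda_i)^{p-1}$ and the numerical constants $\frac{1}{2n}$ and $\frac14\bigl((1+\tfrac{2}{n})k^{1/n}-1\bigr)$ are nonnegative, each right-hand side is monotone in $\|H(\phi)\|_\infty$; taking the infimum over $\phi\in\bigcup_{m\in\N}\mathcal{I}(N,\R^m)$ then recovers $H^{euc}(N)^2$ and produces the claimed estimates. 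The main delicate point I foresee is the chain-rule calculation of the preceding paragraph — in particular, tracking the cancellation of the Tanaka-Webster/Levi-Civita discrepancy — but the hypothesis $df(\xi)=0$ together with Corollary \ref{cor0} makes this essentially routine.
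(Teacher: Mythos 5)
Your proposal is correct and follows essentially the same route as the paper: compose $f$ with an isometric immersion $\phi$ of $N$, use Corollary \ref{cor0} to reduce $\beta_{\phi\circ f}$ to $B_\phi(df(\cdot),df(\cdot))$, identify $H_b(\phi\circ f)=H(\phi)\circ f$ via the orthonormality of $\{df(X_i)\}$, apply Theorem \ref{them1}, and take the infimum over $\phi$. The only cosmetic difference is your intermediate passage through the Levi--Civita form $B_F$ before invoking the chain rule, which the paper states directly for $\beta_{\hat f}$.
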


\begin{proof}
Let $\phi :N\to\R^m$ be any isometric immersion. It is straightforward to check that the map $\hat f=\phi\circ f:(M,\theta)\to \R^m$ is  semi-isometric and that,  $\forall X$, $Y$ $\in H(M)$, 
$$\beta_{\hat f}(X,Y)= d\phi (\beta_f(X,Y)) + B_{\phi}(df(X),df(Y))=B_{\phi}(df(X),df(Y)),$$
where $B_{\phi}$ stands for the second fundamental form of $\phi$ and where the last equality follows from Corollary \ref{cor0}. Now, from the assumptions on $f$, the differential of $f$ induces, for each $x\in M$, an isometry between $H_x(M)$ and $T_{f(x)}N$. Thus,   if $X_1,\cdots, X_{2n}$ is a local orthonormal frame of $H(M)$, then $df(X_1),\cdots, df(X_{2n})$ is also an orthonormal frame of $TN$. This leads to the equality 
$$H_b(\hat f)=H(\phi).$$
Therefore, it suffices to apply Theorem \ref{them1} to $\hat f$ and then take the infimum with respect to $\phi$ to finish the proof.  

\end{proof}

For example, when $N$ is an open set of $\R^{2n}$ or, more generally, a minimal submanifold in $\R^m$, then  $H^{euc}(N)=0$ and the Theorem above gives a class of pseudoconvex CR manifolds including  domains of the Heisenberg group, for which the following holds :

\begin{corollary}\label{corollary4}
Let $(M,\theta)$ be a  strictly pseudoconvex CR manifold of real dimension $2n+1$ which admits a  Riemannian submersion  $f:(M,\theta)\to N$ over a
minimal submanifold  $N$ of dimension $2n$  of  $\R^m$ such that $df(\xi)=0$. 
The eigenvalues of the operator $-\Delta_b+V$ in a bounded domain $\Omega\subset M$, with Dirichlet boundary conditions if $\Omega\not= M$, satisfy for every  $k\geq 1$ and $p\in\R$,
  \begin{equation}\label{eqsub1'}
    \sum_{i=1}^k\big(\lambda_{k+1}-\lambda_i\big)^p \leq\frac {\max\{2,p\}}n \sum_{i=1}^k \big(\lambda_{k+1}-\lambda_i\big)^{p-1}\big(\lambda_i  -T_i\big) 
  \end{equation}
  with $T_i=\int_\Omega Vu_i^2\vartheta_\theta$. Moreover, if $V$ is bounded below on $\Omega$, then for every  $k\geq 1$, 
  \begin{equation}\label{eqsub2'}
  \lambda_{k+1}\leq (1+\frac{2}{n})\frac{1}{k}\sum_{i=1}^k\lambda_i  -\frac 2n\inf_\Omega V
  \end{equation}
and 
  \begin{equation}\label{eqsub3'}
     \lambda_{k+1}\leq (1+\frac{2}{n})k^{\frac{1}{n}}\lambda_1- \left((1+\frac{2}{n})k^{\frac{1}{n}}-1\right)\inf_\Omega V.
\end{equation}

\end{corollary}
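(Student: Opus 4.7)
The plan is to recognize Corollary \ref{corollary4} as an immediate consequence of Theorem \ref{them1'}, obtained by specializing to the case where the Riemannian target $N$ is itself a minimal submanifold of a Euclidean space. The entire argument reduces to verifying that the invariant $H^{euc}(N)$ vanishes under this assumption, and then substituting zero for this quantity everywhere it appears in the three inequalities of Theorem \ref{them1'}.

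First I would observe that the hypothesis ``$N$ is a minimal submanifold of dimension $2n$ of $\R^m$'' means precisely that $N$ carries the metric induced by its inclusion $\iota : N \hookrightarrow \R^m$, and that the mean curvature vector field of $\iota$ vanishes identically. Consequently $\iota \in \mathcal{I}(N,\R^m)$ and $\|H(\iota)\|_\infty = 0$. Since $H^{euc}(N)$ was defined as the infimum of $\|H(\phi)\|_\infty$ taken over all isometric $C^2$ immersions $\phi$ of $N$ into Euclidean spaces of arbitrary dimension, this single witness is enough to force $H^{euc}(N) = 0$.

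Next I would substitute $H^{euc}(N)=0$ into each of the three inequalities \eqref{eqsub1}, \eqref{eqsub2}, and \eqref{eqsub3} of Theorem \ref{them1'}. The additive curvature contributions $\tfrac 14 H^{euc}(N)^2$ in \eqref{eqsub1}, $\tfrac{1}{2n} H^{euc}(N)^2$ in \eqref{eqsub2}, and $\tfrac14 H^{euc}(N)^2$ appearing in the constant $C$ of \eqref{eqsub3} all disappear, yielding exactly \eqref{eqsub1'}, \eqref{eqsub2'}, and \eqref{eqsub3'}. The hypotheses on the submersion $f$ (namely $df(\xi)=0$, the Dirichlet boundary conditions when $\Omega\neq M$, and the semi-boundedness of $V$ from below for the last two inequalities) are carried over verbatim from Theorem \ref{them1'}.

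There is no genuine obstacle in this argument; the corollary is a clean specialization and the essential content is simply the definition of minimality. The only conceptual point worth stressing is that $H^{euc}(N)$ was introduced as an infimum over \emph{all} possible isometric immersions of $N$ into Euclidean spaces, so one does not need to require that the given $N$-valued submersion $f$ itself factor through a Euclidean embedding in any canonical way: the distinguished inclusion $\iota$ provided by the hypothesis suffices to make the invariant vanish and to trigger Theorem \ref{them1'}.
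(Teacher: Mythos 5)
Your proof is correct and follows exactly the paper's route: the paper likewise deduces Corollary \ref{corollary4} from Theorem \ref{them1'} by observing that the inclusion of a minimal submanifold $N\subset\R^m$ is an isometric immersion with vanishing mean curvature, hence $H^{euc}(N)=0$, and then dropping the corresponding terms from \eqref{eqsub1}--\eqref{eqsub3}. Nothing is missing.
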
 

The natural embedding $j:\mathbb{S}^{2n}\to\R^{2n+1}$ of the sphere into the Euclidean space  satisfies $|H(j)|_{\R^{2n+1}}^2=4n^2$. Thus,  Theorem \ref{them1'} leads to the following

\begin{corollary}\label{cor5}
Let $(M,\theta)$ be a strictly pseudoconvex CR manifold of real dimension $2n+1$. Assume that $(M,\theta) $ admits a Riemannian submersion $f:(M,\theta)\to D\subset \mathbb{S}^{2n} $ over a domain $D$ of the standard sphere with $df(\xi)=0$. 
The eigenvalues of the operator $-\Delta_b+V$ in a bounded domain $\Omega\subset M$, with Dirichlet boundary conditions if $\Omega\not= M$, satisfy for every $k\ge1$ and $p\in\R$, 
\begin{equation*}
    \sum_{i=1}^k\big(\lambda_{k+1}-\lambda_i\big)^p \leq \frac {\max \{2,p\}}n \sum_{i=1}^k \big(\lambda_{k+1}-\lambda_i\big)^{p-1}\big(\lambda_i+n^2-T_i\big)
  \end{equation*}
  with $T_i=\int_\Omega V u_i^2\vartheta_\theta$. Moreover, if $V$ is bounded below on $\Omega$, then for every  $k\geq 1$, 
$$\lambda_{k+1}\leq (1+\frac{2}{n})\frac{1}{k}\sum_{i=1}^k \lambda_i+2n-\frac 2n \inf_{\Omega}V$$
and
 $$\lambda_{k+1}\leq (1+\frac{2}{n})k^{\frac{1}{n}}\lambda_1+ C$$
 with $C(n,k,V)=\left((1+\frac{2}{n})k^{\frac{1}{n}}-1\right)\left(n^2 - \inf_{\Omega} V\right).$
 \end{corollary}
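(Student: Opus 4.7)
The plan is to obtain Corollary \ref{cor5} as a direct specialization of Theorem \ref{them1'} to the target $N=D\subset \mathbb{S}^{2n}$, using the natural inclusion $j:\mathbb{S}^{2n}\hookrightarrow \R^{2n+1}$ as the witnessing isometric immersion in the definition of $H^{euc}(N)$. First I would observe that $D$, being an open subset of $\mathbb{S}^{2n}$, inherits this inclusion, so $j|_D\in\mathcal{I}(D,\R^{2n+1})$ is a legitimate competitor in the infimum defining $H^{euc}(D)$.

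Next I would compute (or cite) the mean curvature of the round sphere. For $\mathbb{S}^{2n}\subset\R^{2n+1}$, the second fundamental form with respect to the outward unit normal $\vec{x}$ is $B(X,Y)=-\langle X,Y\rangle_{\R^{2n+1}}\vec{x}$, so its trace gives $H(j)=-2n\,\vec{x}$. Hence $|H(j)|^2_{\R^{2n+1}}=4n^2$ pointwise, which implies
\begin{equation*}
H^{euc}(D)^2\leq \|H(j)\|_\infty^2 = 4n^2,
\end{equation*}
and therefore $\tfrac14 H^{euc}(D)^2\leq n^2$.

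Plugging this bound into the three inequalities \eqref{eqsub1}, \eqref{eqsub2}, \eqref{eqsub3} of Theorem \ref{them1'} yields exactly the three inequalities of Corollary \ref{cor5}: the term $\tfrac14 H^{euc}(N)^2$ in \eqref{eqsub1} becomes $n^2$, the term $\tfrac{1}{2n}H^{euc}(N)^2$ in \eqref{eqsub2} becomes $2n$, and the constant $\left((1+\tfrac{2}{n})k^{1/n}-1\right)\left(\tfrac14 H^{euc}(N)^2-\inf_\Omega V\right)$ in \eqref{eqsub3} becomes $C(n,k,V)=\left((1+\tfrac{2}{n})k^{1/n}-1\right)\left(n^2-\inf_\Omega V\right)$.

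Since this is a straightforward substitution, no real obstacle arises; the only point that deserves care is making sure that the hypotheses of Theorem \ref{them1'} are met, namely that $f:(M,\theta)\to D$ is a Riemannian submersion with $df(\xi)=0$, which is precisely what is assumed in the statement. The role of Corollary \ref{cor0} (invoked inside the proof of Theorem \ref{them1'}) is also verified by the same hypothesis, so the argument closes cleanly.
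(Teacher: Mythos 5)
Your proposal is correct and follows exactly the paper's route: the paper derives Corollary \ref{cor5} from Theorem \ref{them1'} by taking the natural embedding $j:\mathbb{S}^{2n}\to\R^{2n+1}$ with $|H(j)|^2_{\R^{2n+1}}=4n^2$, which gives $\tfrac14 H^{euc}(D)^2\le n^2$ and the three stated inequalities by substitution. Your verification of the hypotheses and the mean curvature computation match the paper's (implicit) argument.
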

 In the particular case of a manifold $M$ without boundary that satisfies the assumptions of Corollary \ref{cor5}, one has, with $V=0$, $\lambda_2(-\Delta_b)=0$,
 $$\lambda_2(-\Delta_b)\le 2n$$
 and, for every $k\ge 1$,
 $$\lambda_{k+1}(-\Delta_b)\leq n(n+2)k^{\frac{1}{n}} -n^2.$$

We denote by  $\mathbb{F}P^{m}$ the $m$-dimensional real projective space if $\mathbb{F}=\R$, the complex projective space of real dimension $2m$ if $\mathbb{F}=\C$, and the quaternionic projective space of real dimension $4m$ if $\mathbb{F}=\mathbb{Q}$. The manifold  $\mathbb{F}P^{m}$  carries a natural metric so that the Hopf fibration $\pi: \mathbb{S}^{d_\mathbb{F}(m+1)-1} \subset \mathbb{F}^{m+1}\to \mathbb{F}P^{m}$  is a Riemannian fibration, where $d_\mathbb{F}= \dim_\R \mathbb{F}$. 

Let $\mathcal{H}_{m+1}(\mathbb{F})=\{ A \in \mathcal{M}_{m+1}(\mathbb{F})\; |\; A^{\ast}:= \overline{^tA}=A \}$  be the vector space of $(m+1)\times(m+1)$ Hermitian matrices with coefficients in $\mathbb{F}$, that we endow with  the inner product
     $$ \langle A,B\rangle= \frac{1}{2} \mbox{trace} (A \, B ).$$
     The map $ \psi: \mathbb{S}^{d_\mathbb{F}(m+1)-1} \subset \mathbb{F}^{m+1}\longrightarrow \mathcal{H}_{m+1}(\mathbb{F})$
given by
$$\psi(z)=
\begin{pmatrix}
|z_{0}|^{2} & z_{0} \bar{z}_{1} & \cdots & z_{0}\bar{z}_{m}\\
z_{1}\bar{z}_{0} & |z_{1}|^{2} & \cdots & z_{1}\bar{z}_{m} \\
 \cdots        & \cdots             & \cdots & \cdots \\
z_{m}\bar{z}_{0}  &  z_{m}\bar{z}_{1} & \cdots & |z_{m}|^{2}
\end{pmatrix}
$$
induces through the Hopf fibration an isometric embedding $\phi$ from $\mathbb{F}P^{m}$ into $\mathcal{H}_{m+1}(\mathbb{F})$. Moreover, $\phi(\mathbb{F}P^{m})$ is a minimal submanifold of the hypersphere $\mathbb{S}\left(\frac{I}{m+1},\sqrt{\frac{m}{2(m+1)}}\right)$ of $\mathcal{H}_{m+1}(\mathbb{F})$ of radius $\sqrt{\frac{m}{2(m+1)}}$ centered at $\frac{I}{m+1}$. One deduces that the mean curvature $H(\phi)$ satisfies 
$$|H(\phi)|^2=2m(m+1)d_\mathbb{F}^2.$$
Therefore, $H^{euc}(\mathbb{F}P^{m})^2 \le 2m(m+1)d_\mathbb{F}^2$ and Theorem \ref{them1'} leads to the following

\begin{corollary}
Let $(M,\theta)$ be a strictly pseudoconvex CR manifold of real dimension $2n+1$ which admits a Riemannian submersion $f:(M,\theta)\to D\subset \mathbb{F}P^{m}$ over a domain of the projective space $\mathbb{F}P^{m}$ of real dimension 2n (i.e. $m=2n/d_\mathbb{F} $) with $df(\xi)=0$. 
The eigenvalues of the operator $-\Delta_b+V$ in a bounded domain $\Omega\subset M$, with Dirichlet boundary conditions if $\Omega\not= M$, satisfy for every  $k\geq 1$ and $p\in\R$,
\begin{equation*}
    \sum_{i=1}^k\big(\lambda_{k+1}-\lambda_i\big)^p \leq \frac {\max \{2,p\}}n \sum_{i=1}^k \big(\lambda_{k+1}-\lambda_i\big)^{p-1}\big(\lambda_i+n(2n+d_\mathbb{F})-T_i\big)
  \end{equation*}
  with $T_i=\int_\Omega V u_i^2\vartheta_\theta$. Moreover, if $V$ is bounded below on $\Omega$, then for every  $k\geq 1$, 
$$\lambda_{k+1}\leq (1+\frac{2}{n})\frac{1}{k}\sum_{i=1}^k \lambda_i+2(2n+d_\mathbb{F})-\frac 2n \inf_{\Omega}V$$
and
 $$\lambda_{k+1}\leq (1+\frac{2}{n})k^{\frac{1}{n}}\lambda_1+ C$$
 with $C(n,k,V)=\left((1+\frac{2}{n})k^{\frac{1}{n}}-1\right)\left(n(2n+d_\mathbb{F}) - \inf_{\Omega} V\right).$
 \end{corollary}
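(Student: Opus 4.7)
The plan is to recognize that this corollary is an immediate application of Theorem \ref{them1'} once one has identified the projective space $\mathbb{F}P^m$ as the target $N$ of the Riemannian submersion and substituted in the value (or upper bound) of $H^{euc}(\mathbb{F}P^m)$ computed just above the corollary through the Hopf-type isometric embedding $\phi:\mathbb{F}P^m\hookrightarrow\mathcal{H}_{m+1}(\mathbb{F})$.

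First, I would verify that the hypotheses of Theorem \ref{them1'} are satisfied. The relation $m=2n/d_\mathbb{F}$ guarantees $\dim_\R\mathbb{F}P^m=md_\mathbb{F}=2n$, so $N=D\subset\mathbb{F}P^m$ is a Riemannian manifold of dimension $2n$, and by assumption $f:(M,\theta)\to D$ is a Riemannian submersion with $df(\xi)=0$. Since any $C^2$ isometric immersion of $\mathbb{F}P^m$ into a Euclidean space restricts to one of $D$ with a mean curvature whose sup norm is no larger, one has $H^{euc}(D)\le H^{euc}(\mathbb{F}P^m)$, so it suffices to bound the latter.

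Second, I would invoke the discussion preceding the corollary: the map $\phi$ induced by $\psi$ through the Hopf fibration is a $C^\infty$ isometric embedding of $\mathbb{F}P^m$ into $\mathcal{H}_{m+1}(\mathbb{F})\simeq\R^{N}$, and its image is a minimal submanifold of the hypersphere $\mathbb{S}(I/(m+1),\sqrt{m/(2(m+1))})$. The mean curvature vector of $\phi$ therefore points radially toward the center of that hypersphere and has squared norm $|H(\phi)|^2=2m(m+1)d_\mathbb{F}^2$. Using $md_\mathbb{F}=2n$ and $(m+1)d_\mathbb{F}=2n+d_\mathbb{F}$, this simplifies to
\[
|H(\phi)|^2 \;=\; 2\,(md_\mathbb{F})\,((m+1)d_\mathbb{F}) \;=\; 4n(2n+d_\mathbb{F}),
\]
which yields $H^{euc}(\mathbb{F}P^m)^2\le 4n(2n+d_\mathbb{F})$.

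Finally, I would substitute this bound into the three inequalities \eqref{eqsub1}--\eqref{eqsub3} of Theorem \ref{them1'}. The factor $\tfrac14 H^{euc}(N)^2$ becomes $n(2n+d_\mathbb{F})$, the factor $\tfrac{1}{2n}H^{euc}(N)^2$ becomes $2(2n+d_\mathbb{F})$, and the constant in \eqref{eqsub3} reduces to $C(n,k,V)=\bigl((1+\tfrac{2}{n})k^{1/n}-1\bigr)\bigl(n(2n+d_\mathbb{F})-\inf_\Omega V\bigr)$, which produces exactly the three inequalities claimed in the corollary. I do not anticipate any real obstacle: the analytic content lies entirely in Theorem \ref{them1'}, the geometric content in the explicit computation of $|H(\phi)|^2$ already recorded in the excerpt, and the remaining step is the arithmetic identity $m(m+1)d_\mathbb{F}^2=2n(2n+d_\mathbb{F})$.
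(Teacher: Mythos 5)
Your proposal is correct and follows exactly the route the paper intends: apply Theorem \ref{them1'} to the submersion onto $D\subset\mathbb{F}P^{m}$, bound $H^{euc}$ via the isometric embedding $\phi$ of $\mathbb{F}P^{m}$ into $\mathcal{H}_{m+1}(\mathbb{F})$ as a minimal submanifold of a hypersphere (so $|H(\phi)|^{2}=2m(m+1)d_{\mathbb{F}}^{2}=4n(2n+d_{\mathbb{F}})$), and substitute. Your extra remark that restricting $\phi$ to the domain $D$ can only decrease $\|H\|_{\infty}$ is a small point the paper leaves implicit, but otherwise the two arguments coincide.
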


    \section{Eigenvalues of the sub-laplacian and semi-isometric maps into  Heisenberg groups}

A model for the Heisenberg group is given by $\mathbb{H}^m=\R^{2m+1}\cong\mathbb{C}^m\times \mathbb{R}$  endowed with the group  law
\begin{equation*}
    (z, t) \cdot (w, s) = (z + w, t + s + 2Im \langle z,w\rangle ),
\end{equation*}
where $(z,t)=(z^1,...,z^n,t)$,  $(w,s)=(w^1,...,w^n,s)\in \mathbb{C}^m\times \mathbb{R}$, and $\langle z,w\rangle=\sum_{j\le m}z^j\overline{w}^j$ is the standard complex scalar product in $\mathbb{C}^m$. A natural basis of the corresponding Lie algebra is given by the family of left-invariant vector fields $\{X_1,...,X_m,Y_1,...,Y_m, T\}$  that coincides with the standard basis of $\R^{2m+1}$ at the origin.
 That is, $T=\frac{\partial}{\partial t}$ and, $\forall j \leq m$,
$$X_{j}=\frac{\partial}{\partial x_{j}}+2y_{j}\frac{\partial}{\partial t},\;\;\; Y_{j}=\frac{\partial}{\partial y_{j}}-2x_{j}\frac{\partial}{\partial t}.$$
The Levi distribution $H(\mathbb{H}^m)$ is spanned by the vector fields $\{X_j, Y_j\}_{j\le m}$. The complex sub-bundle   $T^{1,0}$ of $T\mathbb{H}^m\otimes\C$ spanned by 
$$Z_{j}=\frac{\partial}{\partial z_{j}}+ i \bar{z}_j\frac{\partial}{\partial t}=\frac 12 \left(X_j-i Y_j\right), \qquad j=1,\dots m$$
is such that $H(\mathbb{H}^m)=\mbox{Re}\left(T^{1,0}\oplus T^{0,1}\right)$, with $$T^{0,1}=\mbox{span}\left\{\bar{Z}_{j}=\frac{\partial}{\partial \bar{z}_{j}}- i {z}_j\frac{\partial}{\partial t}=\frac 12 \left(X_j+i Y_j\right), \  j=1,\dots m\right\}.$$ 
This endows $H(\mathbb{H}^m)$ with an almost complex structure $J$ (so that $T^{1,0}=\ker (J-i)$ and $T^{0,1}=\ker (J+i)$) which is actually integrable since $[Z_j,Z_k]=0$ for all $j,k \le m$. Moreover, we have for all $j\le m$,  $JX_j=Y_j$.

The standard pseudo-Hermitian structure on $\mathbb{H}^m$ is
\begin{equation}\label{equa8}
    \theta_{\mathbb{H}^{m}}=dt+i\sum_{j=1}^m(z^jd\bar{z}^j-\bar{z}^jdz^j)=dt+2\sum_{j=1}^m(x^jdy^j-y^jdx^j),
\end{equation}
whose differential is $d\theta_{\mathbb{H}^{m}}=2i\sum_{j=1}^n dz^j\wedge d\bar{z}^j$ and characteristic direction is $T=\frac{\partial}{\partial t}$. Since, for all  $j\le m$ and $k\le m $, one has $[X_j,Y_k]=-4 \delta_{jk} T$ and $[X_j,X_k]=[Y_j,Y_k]=0$, the Levi form $G_{ \theta_{\mathbb{H}^{m}}}$ on $H(\mathbb{H}^m)$ satisfies
$$ G_{ \theta_{\mathbb{H}^{m}}}(X_j,X_k)=G_{ \theta_{\mathbb{H}^{m}}}(Y_j,Y_k)=4\delta_{jk} \qquad \mbox{and}\qquad G_{ \theta_{\mathbb{H}^{m}}}(X_j,Y_k)=0.$$
We will denote by $g_{\mathbb{H}^m}$ the corresponding Webster metric. 

For a vector $W\in T_{(z,t)}\mathbb{H}^m$, 
if we denote by $\{v_1,w_1,...,v_n,w_n,s\}$ its components with respect to the standard basis of $\R^{2m+1}$, i.e.,
\begin{equation*}
    W=\sum_{j=1}^m v_j\frac{\partial}{\partial x_j}+w_j\frac{\partial}{\partial y_j}+s\frac{\partial}{\partial t},
\end{equation*}
then
\begin{eqnarray}\label{equa9}
\nonumber W  &=& \sum_j( v_j X_j + w_j Y_j) + \{s+2\sum_j w_j x_j-v_j y_j\}\frac{\partial}{\partial t}\\
   &=& \sum_j( v_j X_j + w_j Y_j) +\theta_{\mathbb{H}^{m}}(W)T.
\end{eqnarray}
Hence, the coordinates of $W$ with respect to the basis $(X_1,Y_1,...,X_m,Y_m,T)$ of $T_{(z,t)}\mathbb{H}^m$, are $\{v_1,w_1,...,v_n,w_n,\theta_{\mathbb{H}^{m}}(W)\}$. Thus,
\begin{eqnarray}\label{equa9'}
  g_{\mathbb{H}^m}(W,W) &=& 4\sum_j^n(v_j^2+w_j^2)+\theta_{\mathbb{H}^{m}}(W)^2 \\
   &=& 4 |W|^2_{\R^{2m+1}}-4 s^2+\theta_{\mathbb{H}^{m}}(W)^2.
\end{eqnarray}
In particular, if $W$ is horizontal, then $g_{\mathbb{H}^{m}}(W,W)=4|W|^2_{\R^{2m+1}}-4s^2.$

\begin{theorem}\label{them2}
    Let $(M,\theta)$ be a strictly pseudoconvex CR manifold of dimension $2n+1$ and let $f:M\longrightarrow \mathbb{H}^{m}$  be a  $C^2$ semi-isometric map satisfying  $df (H(M))\subseteq H(\mathbb{H}^{m})$. 
  Then the eigenvalues of the operator $-\Delta_b+V$ in any bounded domain $\Omega\subset M$, with Dirichlet boundary conditions if $\Omega\not= M$, satisfy  for every  $k\geq 1$ and $p\in\R$,
   \begin{equation}\label{eqheis1}
    \sum_{i=1}^k\big(\lambda_{k+1}-\lambda_i\big)^p \leq\frac {\max \{2,p\}}n \sum_{i=1}^k \big(\lambda_{k+1}-\lambda_i\big)^{p-1}\big(\lambda_i+ \frac 14 D_i \big) 
  \end{equation}
with $$D_i=\int_\Omega \left(|H_b(f)|_{\mathbb{H}^{m}}^2-4 V\right)u_i^2 \ \vartheta_\theta .$$ 
Moreover, if $V$ is bounded below on $M$, then for every  $k\geq 1$,
   \begin{equation}\label{eqheis2}
    \lambda_{k+1}\leq (1+\frac{2}{n})\frac{1}{k}\sum_{i=1}^k\lambda_i+\frac{1}{2n}D_{\infty}      
\end{equation}
and
\begin{equation}\label{eqheis3}
    \lambda_{k+1}\leq (1+\frac{2}{n})k^{\frac{1}{n}}\lambda_1+\frac 14\left((1+\frac{2}{n})k^{\frac{1}{n}}-1\right)D_{\infty}      
\end{equation}
with $D_{\infty}=\sup_\Omega \left(|H_b(f)|_{\mathbb{H}^{m}}^2-4 V\right)$.

 \end{theorem}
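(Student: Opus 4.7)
The plan is to adapt the proof of Theorem~\ref{them1} by replacing the Euclidean components of $f$ with rescaled Heisenberg coordinates. Writing $f=(x_1\circ f,y_1\circ f,\ldots,x_m\circ f,y_m\circ f,t\circ f)$ in the coordinates of $\mathbb{H}^m\cong\R^{2m+1}$, I would set $F_\alpha:=2\,(x_j\circ f)$ for $\alpha=2j-1$ and $F_\alpha:=2\,(y_j\circ f)$ for $\alpha=2j$, $\alpha=1,\ldots,2m$, and apply Lemma~\ref{lma2} with $A=-\Delta_b+V$ and the multiplication operators $B=F_\alpha$. The factor $2$ absorbs the identity $G_{\theta_{\mathbb{H}^m}}(X_j,X_j)=G_{\theta_{\mathbb{H}^m}}(Y_j,Y_j)=4$. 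For $X\in H(M)$, the hypothesis $df(H(M))\subseteq H(\mathbb{H}^m)$ lets one write $df(X)=\sum_j(A_jX_j+B_jY_j)$ with $A_j=X\cdot(x_j\circ f)$ and $B_j=X\cdot(y_j\circ f)$; the semi-isometry of $f$ then yields $\sum_{\alpha=1}^{2m}(X\cdot F_\alpha)^2=4\sum_j(A_j^2+B_j^2)=|X|_{G_\theta}^2$. This gives $\sum_\alpha|\nabla^HF_\alpha|_{G_\theta}^2=2n$ and $\sum_\alpha\langle\nabla^HF_\alpha,\nabla^Hu_i\rangle_{G_\theta}^2=|\nabla^Hu_i|_{G_\theta}^2$, and running verbatim the integration-by-parts step leading to \eqref{equa6} reproduces the identity $\sum_\alpha\langle[-\Delta_b+V,F_\alpha]u_i,F_\alpha u_i\rangle_{L^2}=2n$.

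The key new step is to evaluate $\sum_\alpha(\Delta_bF_\alpha)^2$, where the Webster geometry of $\mathbb{H}^m$ enters. The chain-rule formula
\begin{equation*}
\Delta_b(\phi\circ f)=\sum_{i=1}^{2n}\mathrm{Hess}_{g_{\mathbb{H}^m}}(\phi)\bigl(df(e_i),df(e_i)\bigr)+d\phi\bigl(H_b(f)\bigr),
\end{equation*}
valid for any smooth $\phi:\mathbb{H}^m\to\R$, combined with a short Koszul-formula computation showing $\nabla^{g_{\mathbb{H}^m}}_{X_k}X_l=\nabla^{g_{\mathbb{H}^m}}_{Y_k}Y_l=0$ and $\nabla^{g_{\mathbb{H}^m}}_{X_k}Y_l\in\R\,T$, forces the Riemannian Hessians of $x_j$, $y_j$ to vanish on $H(\mathbb{H}^m)\times H(\mathbb{H}^m)$ (since $T\cdot x_j=T\cdot y_j=0$). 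Because $df(e_i)\in H(\mathbb{H}^m)$ by hypothesis, the Hessian term drops and $\Delta_b(x_j\circ f)=dx_j(H_b(f))$, $\Delta_b(y_j\circ f)=dy_j(H_b(f))$, whence
\begin{equation*}
\sum_{\alpha=1}^{2m}(\Delta_bF_\alpha)^2=4\sum_{j=1}^m\bigl(dx_j(H_b(f))^2+dy_j(H_b(f))^2\bigr)=|\pi_H H_b(f)|_{g_{\mathbb{H}^m}}^2\leq|H_b(f)|_{g_{\mathbb{H}^m}}^2,
\end{equation*}
with $\pi_H$ the horizontal projection on $T\mathbb{H}^m$. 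The same identity makes the cross term $\sum_\alpha\int_\Omega(\Delta_bF_\alpha)\langle\nabla^HF_\alpha,\nabla^Hu_i^2\rangle_{G_\theta}$ equal to $\int_\Omega\langle\pi_H H_b(f),df(\nabla^Hu_i^2)\rangle_{g_{\mathbb{H}^m}}$, which vanishes by Lemma~\ref{lem1} since $df(\nabla^Hu_i^2)\in df(H(M))$.

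Put together, these three ingredients give $\sum_\alpha\|[-\Delta_b+V,F_\alpha]u_i\|_{L^2}^2\leq 4(\lambda_i-\int_\Omega Vu_i^2)+\int_\Omega|H_b(f)|_{g_{\mathbb{H}^m}}^2u_i^2$. Feeding this bound together with the ``$2n$-identity'' into Lemma~\ref{lma2} and summing over $\alpha$ yields \eqref{eqheis1}; the inequalities \eqref{eqheis2} and \eqref{eqheis3} follow from \eqref{eqheis1} at $p=2$ by the same quadratic manipulation and Cheng--Yang recursion used at the end of the proof of Theorem~\ref{them1}. The main obstacle will be the Hessian-vanishing step: this is the single place where the particular left-invariant Webster geometry of $\mathbb{H}^m$ and the hypothesis $df(H(M))\subseteq H(\mathbb{H}^m)$ are genuinely used, and it accounts for the possible loss $|\pi_HH_b(f)|^2\leq|H_b(f)|^2$ (equality when $H_b(f)$ has no vertical component).
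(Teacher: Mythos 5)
Your proposal is correct and follows essentially the same route as the paper: the same commutator lemma applied to the (rescaled) horizontal coordinates of $\mathbb{H}^m$ composed with $f$, the same use of semi-isometry and of $df(H(M))\subseteq H(\mathbb{H}^m)$ to get the $2n$-identity and the gradient identity, and your chain-rule/Hessian computation of $\Delta_b(x_j\circ f)=dx_j(H_b(f))$ is the paper's Proposition 5.1 in different clothing. The only (harmless) difference is that where you settle for the bound $|\pi_H H_b(f)|^2_{g_{\mathbb{H}^m}}\le |H_b(f)|^2_{g_{\mathbb{H}^m}}$, the paper's Proposition 5.1 shows $H_b(f)$ is in fact horizontal, so this is an equality.
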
 
In the particular case where $(M,\theta)$ is  the Heisenberg group ${\mathbb{H}^{n}}$ endowed with the standard CR structure, this theorem provides an alternative  way to derive Corollary \ref{cor2}\\

The following observation will be crucial for the proof of Theorem \ref{them2}.
\begin{proposition}\label{pro1}
Let $(M,\theta)$ be a strictly pseudoconvex CR manifold and let
\begin{eqnarray*}
  f&:&(M,\theta) \longrightarrow \mathbb{H}^m\simeq \mathbb{C}^m\times \mathbb{R} \\
  &&x\quad \longrightarrow\quad f(x)=(F_1(x),...,F_m(x),\alpha(x))
\end{eqnarray*}
be a $C^2$ map such that $df(H(M))\subset H(\mathbb{H}^m)$. Then
$$H_b(f)=\sum_{j=1}^m(\Delta_b \varphi_j X_j+\Delta_b \psi_j Y_j)$$
where $\varphi_j(x)=\mbox{Re} F_j(x)$ and $\psi_j(x)=\mbox{Im} F_j(x)$.\\
In particular, $H_b (f)$ is a horizontal vector field and
\begin{equation*}
    |H_b (f)|^2_{\mathbb{H}^m}=4\sum_{j=1}^m [(\Delta_b \varphi_j)^2+(\Delta_b \psi_j)^2].
\end{equation*}
\end{proposition}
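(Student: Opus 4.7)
\bigskip

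\noindent\textbf{Proof plan for Proposition \ref{pro1}.} The plan is to expand $H_b(f)$ in a local $G_\theta$-orthonormal frame $\{X_1,\dots,X_{2n}\}$ of $H(M)$, using the decomposition \eqref{equa9} of tangent vectors on $\mathbb{H}^m$ in the left-invariant frame $\{X_1,\dots,X_m,Y_1,\dots,Y_m,T\}$, and then to show that the unwanted terms generated by the Levi-Civita connection of $g_{\mathbb{H}^m}$ cancel pairwise thanks to the Sasakian structure of the Heisenberg group.

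\medskip

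\noindent\textbf{Step 1 (expansion of $df$).} For every $X\in H(M)$, the hypothesis $df(H(M))\subset H(\mathbb{H}^m)$ together with \eqref{equa9} yields
\begin{equation*}
df(X) \;=\; \sum_{j=1}^m \bigl(X(\varphi_j)\,X_j + X(\psi_j)\,Y_j\bigr),
\end{equation*}
since the $T$-component of $df(X)$ vanishes. Because the Tanaka--Webster connection $\nabla$ of $(M,\theta)$ preserves $H(M)$, the field $\nabla_{X_i}X_i$ is again horizontal, so
\begin{equation*}
df(\nabla_{X_i}X_i) \;=\; \sum_{j=1}^m \bigl((\nabla_{X_i}X_i)(\varphi_j)\,X_j + (\nabla_{X_i}X_i)(\psi_j)\,Y_j\bigr).
\end{equation*}

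\noindent\textbf{Step 2 (differentiation along $f$).} Applying the Leibniz rule for the pullback connection $\nabla^f$ (recalling that, for a vector field $Y$ on $\mathbb{H}^m$, $\nabla^f_v(Y\circ f)=\nabla^{g_{\mathbb{H}^m}}_{df(v)}Y$), I compute
\begin{equation*}
\nabla^f_{X_i}df(X_i) \;=\; \sum_j \bigl(X_i(X_i\varphi_j)X_j + X_i(X_i\psi_j)Y_j\bigr) + \sum_j\bigl((X_i\varphi_j)\nabla^{g_{\mathbb{H}^m}}_{df(X_i)}X_j + (X_i\psi_j)\nabla^{g_{\mathbb{H}^m}}_{df(X_i)}Y_j\bigr).
\end{equation*}
Summing over $i$ and subtracting $\sum_i df(\nabla_{X_i}X_i)$, the "intrinsic" part collapses to $\sum_j(\Delta_b\varphi_j\,X_j+\Delta_b\psi_j\,Y_j)$ by the very definition of $\Delta_b$, so it remains to show that the "connection part"
\begin{equation*}
S \;:=\; \sum_{i,j}\bigl((X_i\varphi_j)\,\nabla^{g_{\mathbb{H}^m}}_{df(X_i)}X_j + (X_i\psi_j)\,\nabla^{g_{\mathbb{H}^m}}_{df(X_i)}Y_j\bigr)
\end{equation*}
vanishes. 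This is the main obstacle.

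\medskip

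\noindent\textbf{Step 3 (vanishing of $S$).} On $\mathbb{H}^m$ the left-invariant fields $X_j,Y_j,T$ are parallel for the Tanaka--Webster connection, so $\nabla_{X_k}X_j=\nabla_{Y_k}X_j=\nabla_{X_k}Y_j=\nabla_{Y_k}Y_j=0$. Since $\xi=T$ is Killing (hence $\tau=0$), the formula of Section 2 relating $\nabla^{g_{\mathbb{H}^m}}$ to $\nabla$ gives, for horizontal $X,Y$,
\begin{equation*}
\nabla^{g_{\mathbb{H}^m}}_X Y \;=\; -\tfrac12\,G_{\theta_{\mathbb{H}^m}}(JX,Y)\,T.
\end{equation*}
Using $JX_k=Y_k$, $JY_k=-X_k$ and $G_{\theta_{\mathbb{H}^m}}(X_j,X_k)=G_{\theta_{\mathbb{H}^m}}(Y_j,Y_k)=4\delta_{jk}$, $G_{\theta_{\mathbb{H}^m}}(X_j,Y_k)=0$, I find
\begin{equation*}
\nabla^{g_{\mathbb{H}^m}}_{df(X_i)}X_j = 2(X_i\psi_j)\,T, \qquad \nabla^{g_{\mathbb{H}^m}}_{df(X_i)}Y_j = -2(X_i\varphi_j)\,T,
\end{equation*}
after expanding $df(X_i)$ in the frame $\{X_k,Y_k\}$ via Step 1. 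Substituting into $S$, the two families of terms cancel term-by-term:
\begin{equation*}
S \;=\; \sum_{i,j}\bigl(2(X_i\varphi_j)(X_i\psi_j) - 2(X_i\psi_j)(X_i\varphi_j)\bigr)T \;=\; 0.
\end{equation*}
This gives $H_b(f)=\sum_j(\Delta_b\varphi_j\,X_j+\Delta_b\psi_j\,Y_j)$, which is manifestly horizontal.

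\medskip

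\noindent\textbf{Step 4 (norm).} Since $X_j$ and $Y_j$ are $G_{\theta_{\mathbb{H}^m}}$-orthogonal with squared norm $4$, and $H_b(f)$ is horizontal (so its $g_{\mathbb{H}^m}$-norm coincides with its $G_{\theta_{\mathbb{H}^m}}$-norm), I conclude
\begin{equation*}
|H_b(f)|^2_{\mathbb{H}^m} \;=\; 4\sum_{j=1}^m\bigl((\Delta_b\varphi_j)^2+(\Delta_b\psi_j)^2\bigr),
\end{equation*}
completing the proof. The delicate point is really Step 3: it is the Sasakian identity $\tau=0$ on $\mathbb{H}^m$, combined with the antisymmetric pairing produced by $J$, that makes the off-diagonal contributions pair up with opposite signs.
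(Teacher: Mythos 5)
Your proof is correct and follows essentially the same route as the paper's: expand $df$ in the left-invariant frame $\{X_j,Y_j,T\}$, apply the pullback Leibniz rule, and observe that the connection terms $2(X_i\varphi_j)(X_i\psi_j)T-2(X_i\psi_j)(X_i\varphi_j)T$ cancel pairwise. The only (harmless) difference is that you derive the Levi-Civita coefficients $\nabla^{g_{\mathbb{H}^m}}_{X_k}Y_j=-2\delta_{kj}T$, etc., from the general Tanaka--Webster comparison formula of Section 2 with $\tau=0$, whereas the paper simply lists them.
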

\begin{proof} One has, for any vector $W\in TM$,
\begin{equation*}
    df(W)=\sum_{j=1}^m \left(d\varphi_j(W)X_j+d\psi_j(W)Y_j\right)+\theta(df(W))T.
\end{equation*}
For $W\in H(M)$,  $df(W)\in H(\mathbb{H}^m)$ and, then,
\begin{equation}\label{equa10'}
    df(W)=\sum_{j=1}^m\left(d\varphi_j(W)X_j+d\psi_j(W)Y_j\right).
\end{equation}
Let $\{e_i\}$ be a local orthonormal frame of $H(M)$, then
\begin{equation*}
    \beta_f(e_i,e_i)=\nabla^f_{e_i}df(e_i)-df(\nabla_{e_i}e_i).
\end{equation*}
Since $e_i$ and $\nabla_{e_i}e_i$ are horizontal and that $df(H(M))\subset H(\mathbb{H}^m),$ we have
\begin{equation*}
    \beta_f(e_i,e_i)=\sum_{j=1}^m \nabla_{e_i}^f(d\varphi_j(e_i)X_j+d\psi_j(e_i)Y_j)-\sum_{j=1}^m[ d\varphi_j(\nabla_{e_i}e_i)X_j+d\psi_j(\nabla_{e_i}e_i)Y_j]
\end{equation*}
with
$$\nabla_{e_i}^f(d\varphi_j(e_i)X_j)=e_i\cdot d\varphi_j(e_i)X_j+d\varphi_j(e_i)\nabla^{\mathbb{H}^m}_{df(e_i)}X_j$$
and
$$\nabla_{e_i}^f(d\psi_j(e_i)Y_j)=e_i\cdot d\psi_j(e_i)Y_j+d\psi_j(e_i)\nabla^{\mathbb{H}^m}_{df(e_i)}Y_j.$$
Therefore, 
\begin{eqnarray}\label{equa10}
  \nonumber\beta_f(e_i,e_i) &=& \sum_{j=1}^m\left[e_i\cdot  d\varphi_j(e_i)-d\varphi_j(\nabla_{e_i}e_i)\right]X_j+\sum_{j=1}^m\left[e_i\cdot d\psi_j(e_i)-d\psi_j(\nabla_{e_i}e_i)\right]Y_j \\
   &+&\sum_{j=1}^m \left[d\varphi_j(e_i)\nabla^{\mathbb{H}^m}_{df(e_i)}X_j +d\psi_j(e_i)\nabla^{\mathbb{H}^m}_{df(e_i)}Y_j\right].
\end{eqnarray}
Recall that the Levi-Civita connection of $\mathbb{H}^m$ is such that
$$\nabla^{\mathbb{H}^m}_{X_k}X_j=\nabla^{\mathbb{H}^m}_{Y_k}Y_j=\nabla^{\mathbb{H}^m}_T T=0,$$
$$\nabla^{\mathbb{H}^m}_{X_k}Y_j=-2\delta_{kj}T,\;\;\quad\nabla^{\mathbb{H}^m}_{X_k} T=2Y_k,\;\;\quad \nabla^{\mathbb{H}^m}_{Y_k}T=-2X_k,$$
$$\nabla^{\mathbb{H}^m}_{Y_k}X_j=2\delta_{kj}T,\;\;\quad\nabla^{\mathbb{H}^m}_{T} X_k=2Y_k,\;\;\quad \nabla^{\mathbb{H}^m}_{T}Y_k=-2X_k.$$
Thus,
\begin{eqnarray*}
  \nabla_{df(e_i)}^{\mathbb{H}^m}X_j &=& \sum_k(d\varphi_k(e_i)\nabla_{X_k}X_j+d\psi_k(e_i)\nabla_{Y_k}X_j) \\
   &=& d\psi_j(e_i)\nabla_{Y_j}X_j=2d\psi_j(e_i)T.
\end{eqnarray*}
and
\begin{equation*}
    \nabla_{df(e_i)}^{\mathbb{H}^m}Y_j=-2d\varphi_j(e_i)T.
\end{equation*}
Replacing into \eqref{equa10} and summing up with respect to $i$, we get
\begin{eqnarray*}
  H_b(f) &=& \sum_{i=1}^{2n} \sum_{j=1}^{m} \left([e_i\cdot d\varphi_j(e_i)-d\varphi_j(\nabla_{e_i}e_i)]X_j + [e_i \cdot d\psi_j(e_i)-d\psi_j(\nabla_{e_i}e_i)]Y_j\right) \\
   &=&  \sum_{j=1}^{m}\left(\Delta_b \varphi_j X_j+\Delta_b \psi_j Y_j\right).
\end{eqnarray*}
\end{proof}

\begin{proof}[Proof of Theorem \ref{them2}]
As in the proof of Theorem \ref{them1}, we will use the components of the map $f$ as multiplication operators. Let us write $f(x)=(F_1(x),...,F_m(x),\alpha(x))\in \mathbb{C}^m\times \mathbb{R}$ and
$F_j(x)=\varphi_j(x)+i\psi_j(x)$. The main difference with respect to the Euclidean case is that here, only the   $\mathbb{C}^m$ components of $f$ come in. 
 All along this proof we will use the fact that, $\forall\,W\in H_x(M)$, the vector  $df(W)$ is horizontal and (see \eqref{equa10'})
\begin{equation}\label{11}
    |df(W)|^2_{\mathbb{H}^m}=4\sum_{j=1}^{m}\left(|d\varphi_j(W)|^2+|d\psi_j(W)|^2\right).
\end{equation}
Repeating the same calculations as in the proof of the Theorem \ref{them1}, we get
\begin{eqnarray}
    \nonumber \sum_{j=1}^{m}\langle  [-\Delta_b+V,\varphi_j] u_i,\varphi_j  u_i\rangle_{L^2}&+&\langle  [-\Delta_b+V,\psi_j] u_i,\psi_j  u_i\rangle_{L^2}\\
  \nonumber   &=& \sum_{j=1}^{m}\int_{\Omega}\big\{|\nabla^H  \varphi_j|_{G_{\theta}}^2+|\nabla^H  \psi_j|_{G_{\theta}}^2\big\}u_i^2.
\end{eqnarray}
 Let  $\{e_i\}$ be a $G_\theta$-orthonormal basis of $H_x(M)$, then
\begin{eqnarray}\label{equa12}
 \nonumber \sum_{j=1}^{m}|\nabla^H \varphi_j|^2_{G_{\theta}}+|\nabla^H \psi_j|^2_{G_{\theta}} &=& \sum_{j=1}^{m}\sum_{i=1}^{2n} \langle \nabla^H \nonumber\varphi_j,e_i\rangle^2_{G_{\theta}}+\langle \nabla^H \psi_j,e_i\rangle^2_{G_{\theta}}\\
   \nonumber&=&  \sum_{i=1}^{2n} \sum_{j=1}^{m}\langle \nabla \varphi_j,e_i\rangle^2_{G_{\theta}}+\langle \nabla \psi_j,e_i\rangle^2_{G_{\theta}}\\
   \nonumber&=&  \sum_{i=1}^{2n} \sum_{j=1}^{2m} (d\varphi_j(e_i)^2+d\psi_j(e_i)^2)\\
  \nonumber &=& \frac 14 \sum_{i=1}^{2n}| df(e_i)|^2_{\mathbb{H}^m}=\frac n2.
\end{eqnarray}
Thus, 
\begin{equation}\label{equa12'}
\sum_{j=1}^{m}\langle  [-\Delta_b+V,\varphi_j] u_i,\varphi_j  u_i\rangle_{L^2}+\langle  [-\Delta_b+V,\psi_j] u_i,\psi_j  u_i\rangle_{L^2}=\frac n2.
\end{equation}
On the other hand,
\begin{eqnarray*}
  \|[-\Delta_b+V,\varphi_j]u_i\|^2_{L^2} &=&  \int_{\Omega}\left((\Delta_b \varphi_j)u_i+2\langle \nabla^H  \varphi_j,\nabla^H  u_i\rangle_{G_{\theta}}\right)^2 \\
   &=&  \int_{\Omega}(\Delta_b \varphi_j)^2 u_i^2+4\int_{\Omega}\langle \nabla^H  \varphi_j,\nabla ^H  u_i\rangle^2_{G_{\theta}}\\
   &+& 2 \int_{\Omega}(\Delta_b \varphi_j)\langle \nabla^H  \varphi_j,\nabla^H  u_i^2\rangle_{G_{\theta}}.
\end{eqnarray*}
We have a similar formula for $\|[-\Delta_b+V,\psi_j]u_i\|^2_{L^2}$. Since $\nabla^H u_i\in H(M)$, one has
\begin{eqnarray*}
   \sum_{j=1}^{m}\langle \nabla^H \varphi_j,\nabla ^H  u_i\rangle^2_{G_{\theta}}&+&\langle \nabla^H \psi_j,\nabla ^H  u_i\rangle^2_{G_{\theta}}\\ &=& \sum_{j=1}^{m}\{d\varphi_j(\nabla^H u_i)^2 +d\psi_j(\nabla^H u_i)^2\}\\
   &=& \frac 14|df(\nabla^H u_i)^2|_{\mathbb{H}^m} 
   =\frac 14 |\nabla^H u_i|^2_{G_{\theta}}.
\end{eqnarray*}
 Therefore,
\begin{eqnarray*}
\sum_{j=1}^{m} \int_{\Omega} \left(\langle \nabla^H \varphi_j,\nabla ^H  u_i\rangle^2_{G_{\theta}}+\langle \nabla^H \psi_j,\nabla ^H  u_i\rangle^2_{G_{\theta}}\right)&=&\frac 14 \int_{\Omega} |\nabla^H u_i|^2_{G_{\theta}}\\
&=& \frac14\left( \lambda_i-\int_{\Omega}Vu_i^2\right).
\end{eqnarray*}
For the  two remaining terms, we have thanks to Proposition \ref{pro1} and the identity \eqref{equa9'},
\begin{equation*}
    \sum_{j=1}^{m}\int_{\Omega} \left((\Delta_b \varphi_j)^2 +(\Delta_b \psi_j)^2 \right)u_i^2=\frac 14\int_{\Omega} |H_b(f)|_{\mathbb{H}^m}^2 u_i^2
\end{equation*}
and
\begin{eqnarray*}
 \sum_{j=1}^{m}\int_{\Omega} & &\left(\Delta_b \varphi_j \langle \nabla^H \varphi_j,\nabla^Hu_i^2\rangle_{G_{\theta}}+\Delta_b \psi_j\langle \nabla^H \psi_j,\nabla^Hu_i^2\rangle_{G_{\theta}}\right)\\
 &=&\frac 14 \int_{\Omega} \langle H_b(f),\sum_{j=1}^{m} d\varphi_j(\nabla^H u_i^2)X_j+\sum_{j=1}^{m} d\psi_j(\nabla^H u_i^2)Y_j\rangle_{\mathbb{H}^m}\\
   &=& \frac 14\int_{\Omega} \langle H_b(f),df(\nabla^H u_i^2)\rangle_{\mathbb{H}^m}=0,
\end{eqnarray*}
where the last equality follows from the fact that $H_b(f)$ is orthogonal to $df(H(M))$ (Lemma \ref{lem1}).
Finally, 
\begin{equation}\label{equa12''}
\|[-\Delta_b+V,\varphi_j]u_i\|^2_{L^2}+ \|[-\Delta_b+V,\psi_j]u_i\|^2_{L^2}=\lambda_i+\frac 14\int_{\Omega} \left(|H_b(f)|_{\mathbb{H}^m}^2 -V\right)u_i^2.
\end{equation}
Applying  Lemma \ref{lma2}  with $A=-\Delta_b+V$ and $B=\varphi_j$ then $B=\psi_j$, summing up with respect to $j$ and  using \eqref{equa12'} and \eqref{equa12''}, we obtain the inequality \eqref{eqheis1}.

As in the proof of Theorem  \ref{them1}, we derive the inequalities   \eqref{eqheis2} and  \eqref{eqheis3} from \eqref{eqheis1} with $p=2$. 
\end{proof}
\section{Reilly type inequalities for CR manifolds mapped into the Euclidean space or the Heisenberg group }

Let $(M,\theta)$ be a compact strictly pseudo-convex CR manifold. If  $f:(M,{\theta})\longrightarrow \mathbb{R}^m$ is a semi-isometric $C^2$ map, then Theorem \ref{them1} (i.e. inequality \eqref{eqeuc1} with $k=1$ and $p=1$) gives,
$$\lambda_2(-\Delta_b+V)\le (1+\frac 2n) \lambda_1(-\Delta_b+V)+\frac1{2n}\int_M\left(|H_b(f)|_{\mathbb{R}^{m}}^2-4V\right)u_1^2.$$

When $M$ is a compact manifold without boundary and $V=0$, one has $\lambda_1(-\Delta_b)=0$ and $u_1^2=\frac 1 {V(M,\theta)}$. Therefore,  the following Reilly type result holds (see\cite{SoufiIlias} for details about Reilly inequalities)
\begin{equation*}
    \lambda_2(-\Delta_b)\leq \frac{1}{2nV(M,\theta)}\int_M |H_b(f)|_{\R^m}^2.
  \end{equation*}
  
 This result can be obtained in an independent and simpler way, in the spirit of Reilly's proof, under  weaker assumptions on $f$.   Moreover, the equality case can be characterized. Indeed, we first have the following
\begin{theorem}\label{them3}
 Let $(M,\theta)$ be a compact strictly pseudoconvex CR manifold of dimension $2n+1$ without boundary. For every $C^2$  map $f:(M,\theta)\longrightarrow \mathbb{R}^m$  one has  
  \begin{equation}\label{reilly}
    \lambda_2(-\Delta_b)E_b(f) \leq \frac 12\int_M |H_b(f)|_{\R^m}^2
  \end{equation}
  where the equality holds if and only if the Euclidean components $f_1, \dots, f_m$ of $f$ satisfy $-\Delta_b f_\alpha=\lambda_2(-\Delta_b)\left(f_\alpha  - \fint f_\alpha\right)$ for every $\alpha\le m$.  
  
\end{theorem}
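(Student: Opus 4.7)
The plan is to follow the classical Reilly strategy adapted to the sub-Laplacian, combining the variational characterization of $\lambda_2(-\Delta_b)$ with Cauchy--Schwarz and integration by parts. First, for each Euclidean component $f_\alpha$ of $f$, I set $c_\alpha = \fint_M f_\alpha$ so that $f_\alpha - c_\alpha$ has mean zero and therefore is an admissible test function for the min--max characterization of $\lambda_2(-\Delta_b)$. This yields
\begin{equation*}
\lambda_2(-\Delta_b)\int_M (f_\alpha-c_\alpha)^2\,\vartheta_\theta \;\le\; \int_M |\nabla^H f_\alpha|^2_{G_\theta}\,\vartheta_\theta.
\end{equation*}
Summing over $\alpha=1,\dots,m$ and noting that, by the very definition of the horizontal energy density, $\sum_\alpha |\nabla^H f_\alpha|^2_{G_\theta} = 2\,e_b(f)$, I obtain
\begin{equation*}
\lambda_2(-\Delta_b)\sum_{\alpha=1}^m \int_M (f_\alpha-c_\alpha)^2\,\vartheta_\theta \;\le\; 2\,E_b(f).
\end{equation*}

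Next, the Levi tension is recovered from the components through the identity $H_b(f) = (\Delta_b f_1,\dots,\Delta_b f_m)$ (see \eqref{equa1}). Integration by parts gives
\begin{equation*}
\int_M |\nabla^H f_\alpha|^2_{G_\theta}\,\vartheta_\theta \;=\; -\int_M (f_\alpha-c_\alpha)\,\Delta_b f_\alpha\,\vartheta_\theta,
\end{equation*}
and Cauchy--Schwarz on the right, followed by Cauchy--Schwarz for the sum over $\alpha$, yields
\begin{equation*}
2\,E_b(f)\;\le\; \Bigl(\sum_\alpha \int_M (f_\alpha-c_\alpha)^2\Bigr)^{\!1/2}\Bigl(\sum_\alpha \int_M (\Delta_b f_\alpha)^2\Bigr)^{\!1/2}
= \Bigl(\sum_\alpha \int_M (f_\alpha-c_\alpha)^2\Bigr)^{\!1/2}\Bigl(\int_M |H_b(f)|^2_{\R^m}\Bigr)^{\!1/2}.
\end{equation*}
Plugging in the first bound on $\sum_\alpha \int (f_\alpha-c_\alpha)^2$, squaring and simplifying yields exactly the desired inequality \eqref{reilly}.

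For the equality case, I trace the equality conditions through each step. Equality in Cauchy--Schwarz for each fixed $\alpha$ forces $f_\alpha - c_\alpha$ to be a (nonnegative) scalar multiple of $-\Delta_b f_\alpha$; equality in the summation Cauchy--Schwarz forces the proportionality constant to be independent of $\alpha$; and equality in the min--max step forces this common constant to equal $1/\lambda_2(-\Delta_b)$, with $f_\alpha - c_\alpha$ lying in the $\lambda_2$-eigenspace. This gives precisely $-\Delta_b f_\alpha = \lambda_2(-\Delta_b)(f_\alpha - \fint f_\alpha)$ for every $\alpha$. Conversely, if this eigenvalue equation holds for every component, an easy direct computation shows that both sides of \eqref{reilly} equal $\tfrac{\lambda_2(-\Delta_b)}{2}\sum_\alpha\int_M |\nabla^H f_\alpha|^2_{G_\theta}$, and equality follows. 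The only mildly delicate point is organizing the two Cauchy--Schwarz inequalities so the equality analysis cleanly identifies the stated condition; no subelliptic obstacle appears because the proof only uses self-adjointness of $\Delta_b$, integration by parts, and the min--max principle, all of which are available on a closed strictly pseudoconvex CR manifold.
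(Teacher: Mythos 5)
Your proposal is correct and follows essentially the same route as the paper: the min--max inequality applied to each mean-zero component, the identity $\sum_\alpha\int|\nabla^Hf_\alpha|^2=2E_b(f)$, integration by parts, and Cauchy--Schwarz against $H_b(f)=(\Delta_bf_1,\dots,\Delta_bf_m)$; the only cosmetic difference is that you apply Cauchy--Schwarz componentwise and then sum, whereas the paper does a single vector-valued Cauchy--Schwarz on $\int_M\langle f,H_b(f)\rangle$, which yields the identical bound. Your equality analysis is, if anything, slightly more carefully traced than the paper's.
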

\begin{proof} 
  Replacing if necessary $f_\alpha$ by $f_\alpha  - \fint f_\alpha$ we can assume without loss of generality  that the Euclidean components $f_1, \dots, f_m$ of $f$ satisfy $\int_M f_\alpha \vartheta_{\theta}=0 $  so that, we have
       \begin{equation}\label{equa13}
       \lambda_2(-\Delta_b) \int_M f_\alpha^2\leq \int_M |\nabla^H f_\alpha|_{G_\theta}^2.
       \end{equation}
Summing up with respect to $\alpha$, we get
    \begin{equation*}
        \lambda_2(-\Delta_b)\int_M |f|_{\R^m}^2\leq\int_M \sum_{\alpha=1}^m |\nabla^H f_\alpha|_{G_\theta}^2.
    \end{equation*}
    Denoting by $\{\epsilon_\alpha\}$ the standard  basis of $\mathbb{R}^m$ and by  $\{X_i\}$ a local orthonormal frame of $H(M)$,   we observe that
    \begin{eqnarray*}
    2 e_b(f)&=& \sum_{i=1}^{2n}|df(X_i)|_{\R^m}^2=\sum_{i=1}^{2n} \sum_{\alpha=1}^m\langle df(X_i),\epsilon_\alpha\rangle_{\R^m}^2\\
    &=&\sum_{\alpha=1}^m\sum_{i=1}^{2n}|df_\alpha(X_i)|_{\R^m}^2=\sum_{\alpha=1}^m |\nabla^H f_\alpha|_{G_\theta}^2.
     \end{eqnarray*}
    Therefore,
    \begin{equation}\label{equa13'}
        \lambda_2(-\Delta_b)\int_M |f|_{\R^m}^2\leq\int_M \sum_{\alpha=1}^m |\nabla^H f_\alpha|_{G_\theta}^2= 2E_b(f).
    \end{equation} 
 On the other hand,  we have
   \begin{eqnarray*}
    4E_b(f)^2 &=& \left(\sum_{\alpha=1}^m\int_M|\nabla^H f_\alpha|_{G_\theta}^2\right)^2 
      = \left(\sum_{\alpha=1}^m \int_M f_\alpha \Delta_b f_\alpha\right)^2\\
      &=& \left(\int_M\langle f(x),\sum_\alpha^m (\Delta_b f_\alpha)\epsilon_\alpha\rangle_{\R^m}\right)^2 \\
      &=& \left(\int_M\langle f(x),H_b(f)\rangle_{\R^m}\right)^2
      \leq \int_M |f|_{\R^m}^2\int_M |H_b(f)|_{\R^m}^2.
   \end{eqnarray*}
   Combining with \eqref{equa13'}, we get
   \begin{equation*}
   4E_b(f)^2\leq \frac{ 2E_b(f)}{\lambda_2(-\Delta_b)}\int_M |H_b(f)|_{\R^m}^2
   \end{equation*}
   which gives the desired inequality.
   
   Now, if we have, for every $\alpha\le m$,  $-\Delta_b f_\alpha=\lambda_2(-\Delta_b)f_\alpha$, then $H_b(f)= (\Delta_b f_1,\dots,\Delta_bf_m)=-\lambda_2(-\Delta_b) f	$ and $\int_M |H_b(f)|_{\R^m}^2 =\lambda_2(-\Delta_b)^2\int_M |f|_{\R^m}^2 $. On the other hand,  $E_b(f)= \int_M \sum_{\alpha=1}^m |\nabla^H f_\alpha|_{G_\theta}^2=\lambda_2(-\Delta_b)\int_M |f|_{\R^m}^2 $ which implies that the equality holds in \eqref{reilly}. Reciprocally, if  the equality holds in \eqref{reilly} for a nonconstant map $f$, then it also holds in \eqref{equa13} for each $\alpha$. Thus, the functions $f_1,\dots, f_m$ belong to the $\lambda_2(-\Delta_b)$-eigenspace of $-\Delta_b$. 
 \end{proof}

If a map $f:(M,\theta)\longrightarrow \mathbb{R}^m$ preserves the metric with respect to horizontal directions (i.e., $|df(X)|_{\R^m}= |X|_{G_\theta}$ for any $X\in H(M)$), then  its energy density $e_b(f)$ is constant equal to $n$ and  
$$E_b(f)=nV(M,\theta).$$
Inequality \eqref{reilly} becomes in this case
\begin{equation}\label{reilly iso}
    \lambda_2(-\Delta_b)\leq \frac{1}{2nV(M,\theta)}\int_M |H_b(f)|_{\R^m}^2.
  \end{equation}
  The characterization of the equality case is the last inequality requires the following Takahashi's type result.
\begin{lemma}\label{takahashi} 
 Let $(M,\theta)$ be a strictly pseudoconvex CR manifold of dimension $2n+1$  and let $f:(M,\theta)\longrightarrow \mathbb{R}^m$  be  $C^2$ map.
\begin{itemize}
 \item[i)] Assume that $f(M)$ is contained in a sphere $\mathbb{S}^{m-1}(r)$ of radius $r$ centered at the origin. Then $f$ is pseudo-harmonic from $(M,\theta)$ to $S^{m-1}(r)$ if and only if its Euclidean components $f_1, \dots, f_m$ satisfy, $\forall \alpha\le m$,
 $$-\Delta_b f_\alpha=\mu f_\alpha $$
 with $\mu = \frac {2}{r^2}e_b(f) \in C^\infty(M)$.  
\item[ii)]  Assume that $f$ is semi-isometric. If the Euclidean components $f_1, \dots, f_m$ of $f$ satisfy, $\forall \alpha\le m$, $-\Delta_b f_\alpha=\lambda f_\alpha $, for some  $\lambda\in\R$, then $f(M)$ is contained in the sphere $\mathbb{S}^{m-1}(r)$ of radius $r=\sqrt{\frac {2n}{\lambda} }$ and $f$ is a pseudo-harmonic map from $(M,\theta)$ to $S^{m-1}(r)$. 
Conversely, if $f(M)$ is contained in a sphere $\mathbb{S}^{m-1}(r)$ and if $f$ is a pseudo-harmonic map from $(M,\theta)$ to  $S^{m-1}(r)$, then, $\forall \alpha\le m$, $-\Delta_b f_\alpha=\frac{2n}{r^2} f_\alpha $.
\end{itemize}
 \end{lemma}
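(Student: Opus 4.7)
The plan is to derive both parts from a single decomposition of the Levi tension of the composition $\tilde f = j \circ f$, where $j : \mathbb{S}^{m-1}(r) \hookrightarrow \R^m$ is the canonical inclusion. Exactly as in the proof of Theorem~\ref{them1'}, for any smooth map $f : M \to \mathbb{S}^{m-1}(r) \subset \R^m$ and any pair $X, Y$ of horizontal vector fields one has
$$\beta_{\tilde f}(X, Y) = dj\bigl(\beta_f(X, Y)\bigr) + B_j\bigl(df(X), df(Y)\bigr),$$
where $B_j$ denotes the (Riemannian) second fundamental form of the inclusion $j$. A direct shape-operator computation, using $\nabla^{\R^m}_U(p/r) = U/r$, yields $B_j(U, V) = -\tfrac{1}{r^2}\langle U, V\rangle\, p$ at $p \in \mathbb{S}^{m-1}(r)$. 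Taking the $G_\theta$-trace gives the key identity
$$H_b(\tilde f) = dj\bigl(H_b^S(f)\bigr) - \frac{2\, e_b(f)}{r^2}\, f,$$
where $H_b^S(f)$ is the Levi tension of $f$ regarded as a map into $\mathbb{S}^{m-1}(r)$. By \eqref{equa1} the left-hand side equals $(\Delta_b f_1, \ldots, \Delta_b f_m)$, and on the right the first summand is tangent to the sphere at $f(x)$ while the second is normal to it.

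Part~(i) is then immediate: $f$ is pseudo-harmonic into $\mathbb{S}^{m-1}(r)$ iff $H_b^S(f) = 0$, which by the displayed identity is equivalent to $-\Delta_b f_\alpha = \tfrac{2 e_b(f)}{r^2}\, f_\alpha = \mu f_\alpha$ for every $\alpha$. The backward direction of Part~(ii) follows directly from Part~(i): a semi-isometric $f$ satisfies $e_b(f) = \tfrac{1}{2}\sum_{i=1}^{2n}|df(X_i)|^2_{\R^m} = \tfrac{1}{2}\sum_{i=1}^{2n}|X_i|^2_{G_\theta} = n$, so $\mu$ becomes the constant $\tfrac{2n}{r^2}$.

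The forward direction of Part~(ii) is the main obstacle. Assume $f$ is semi-isometric with $-\Delta_b f_\alpha = \lambda f_\alpha$ for every $\alpha$ (necessarily $\lambda > 0$, as follows e.g.\ by integration when $M$ is compact without boundary). Then $H_b(f) = -\lambda f$, and Lemma~\ref{lem1} applied to the semi-isometric map $f : M \to \R^m$ gives $H_b(f) \perp df(H(M))$. Consequently, for every horizontal $X$,
$$\tfrac{\lambda}{2}\, X(|f|^2) = \lambda\, \langle f, df(X)\rangle_{\R^m} = -\langle H_b(f), df(X)\rangle_{\R^m} = 0,$$
so $|f|^2$ is annihilated by every horizontal vector field. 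Strict pseudoconvexity, expressed by $\theta([X, JX]) = G_\theta(X, X) \neq 0$, implies that $\xi$ belongs to $H(M) + [H(M), H(M)]$; hence $\xi(|f|^2)$ is determined by horizontal derivatives and brackets of $|f|^2$, all of which vanish, so $|f|^2$ is constant on each connected component of $M$. Applying $\Delta_b$ and using the chain rule then gives
$$0 = \Delta_b |f|^2 = -2\lambda |f|^2 + 4 e_b(f) = -2\lambda |f|^2 + 4n,$$
whence $|f|^2 = 2n/\lambda$ and $r = \sqrt{2n/\lambda}$. Pseudo-harmonicity of $f : M \to \mathbb{S}^{m-1}(r)$ is then the content of Part~(i).
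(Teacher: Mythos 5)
Your proposal is correct and follows essentially the same route as the paper: the Gauss-type decomposition $\beta_{j\circ f}=dj(\beta_{\bar f})+B_j(df,df)$ with $B_j(U,V)=-\tfrac1{r^2}\langle U,V\rangle\,p$, its trace giving $H_b(j\circ f)=dj(H_b(\bar f))-\tfrac{2e_b(f)}{r^2}f$, and, for the forward part of (ii), Lemma \ref{lem1} together with the bracket-generating property of $H(M)$ to force $|f|^2$ constant. The only (harmless) differences are cosmetic: you determine $r$ by applying $\Delta_b$ to $|f|^2$ directly rather than quoting (i), and the paper's converse in (i) allows an arbitrary smooth factor $\mu$ and then identifies it, whereas you treat the prescribed $\mu=\tfrac{2}{r^2}e_b(f)$, which is all the statement requires.
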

 This lemma is to be compared with Example 5.3 of \cite{BarlettaDragomirUrakawa} in which a sign mistake in Greenleaf's formula led to an incorrect characterization of pseudo-harmonic maps into spheres. 
\begin{proof}[Proof of Lemma \ref{takahashi}]
i) For convenience, let us write $f=j\circ \bar f$ where $j:\mathbb{S}^{m-1}(r)\to \R^m$ is the standard embedding and $\bar f:M\to \mathbb{S}^{m-1}(r)$ is defined by $\bar f(x)=f(x)$. It is straightforward to observe that, $\forall X$, $Y\in H(M)$,
$$\beta_f (X,Y) =B_j(d\bar f(X),d\bar f(Y)) +dj(\beta_{\bar f} (X,Y))$$
where $B_j (W,W)=-\frac 1{r^2}|W|_{\R^m}^2\vec{x}$ is the second fundamental form of the sphere $\mathbb{S}^{m-1}(r)$. Taking the trace, we obtain
$$H_b(f)=-\frac{2e_b(\bar f)}{r^2} \bar f +dj(H_b(\bar f))=-\frac{2e_b(f)}{r^2} f +dj(H_b(\bar f)).$$
Hence, if $f$ is pseudo-harmonic from $(M,\theta)$ to $S^{m-1}(r)$, then $H_b(\bar f)=0$ and, consequently, $H_b(f)=-\frac{2e_b(f)}{r^2} f   $ with $H_b(f)=(\Delta_b f_1, \dots, \Delta_b f_m)$ (see \eqref{equa1}). Thus,  $\forall \alpha\le m$, $-\Delta_b f_\alpha=\frac {2}{r^2}e_b(f) f_\alpha$.
  \\
 
Reciprocally, if there exists a function $\mu  \in C^\infty(M)$ such that  $-\Delta_b f_\alpha=\mu f_\alpha $ for every $\alpha\le m$, then 
$$0=\Delta_b \left(\sum_{\alpha=1}^m f_\alpha^2 \right)=-2\mu \sum_{\alpha=1}^m f_\alpha^2+2\sum_{\alpha=1}^m |\nabla^H f_\alpha|_{G_\theta}^2=-2{\mu}{r^2} + 4e_b(f).$$
Hence, $\mu = \frac{2e_b(f)}{r^2}$, $H_b(f)=-\frac{2e_b(f)}{r^2} f $ and, then, $H_b(\bar f)=0$, which means that $f$ is pseudo-harmonic from $(M,\theta)$ to $S^{m-1}(r)$.

\medskip

\noindent ii) From the assumptions, one has $H_b(f)=-\lambda f $ (see \eqref{equa1}). Since $f$ is semi-isometric, we know that $H_b(f)$ is orthogonal to $df(H(M))$ (Lemma \ref{lem1}). Therefore, $\forall x\in M$ and $\forall X\in H_x(M)$, one has
$\left\langle f(x), df_x(X) \right\rangle_{\R^m} =0$
which implies that the function $x\mapsto |f(x)|_{\R^m}^2$ has zero derivative with respect to all horizontal directions. Since the distribution  $H(M)$ is not integrable, this implies that $|f(x)|_{\R^m}^2$ is constant on $M$, that is $f(M)$ is contained in a sphere $\mathbb{S}^{m-1}(r)$ of radius $r$ centered at the origin. The pseudo-harmonicity of $f$ from $M$ into $\mathbb{S}^{m-1}(r)$ then follows from (i). Moreover, one necessarily has $\lambda= \frac{2e_b(f)}{r^2}$ with $e_b(f)=n$ since $f$ is semi-isometric. Thus, the radius of the sphere is such that $r^2=\frac {2n}{\lambda}$.
 
\end{proof}

Theorem \ref{them3} and Lemma \ref{takahashi}  lead to the following 
\begin{corollary}\label{them3'}
 Let $(M,\theta)$ be a compact strictly pseudoconvex CR manifold of dimension $2n+1$ without boundary and let $f:(M,\theta)\longrightarrow \mathbb{R}^m$  be  $C^2$ semi-isometric map.
  Then
  \begin{equation}\label{reilly iso}
    \lambda_2(-\Delta_b)\leq \frac{1}{2nV(M,\theta)}\int_M |H_b(f)|_{\R^m}^2.
  \end{equation}
Moreover, the equality holds in this inequality if and only if $f(M)$ is contained in a sphere $\mathbb{S}^{m-1}(r)$ of radius $r=\sqrt{\frac {2n}{\lambda_2(-\Delta_b)} }$ and $f$ is a pseudo-harmonic map from $(M,\theta)$ to the sphere $S^{m-1}(r)$. 
\end{corollary}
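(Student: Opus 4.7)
The plan is to deduce the inequality by specializing Theorem \ref{them3} to the semi-isometric case, and then match its equality condition with Lemma \ref{takahashi}(ii). Since $f$ is semi-isometric, for any local $G_\theta$-orthonormal frame $\{X_i\}$ of $H(M)$ one has $|df(X_i)|_{\R^m}=|X_i|_{G_\theta}=1$, so the horizontal energy density is constant equal to $n$ and $E_b(f)=nV(M,\theta)$. Plugging this into the Reilly-type inequality \eqref{reilly} of Theorem \ref{them3} yields \eqref{reilly iso} at once.

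For the forward direction of the equality characterization, I would invoke the equality case of Theorem \ref{them3}: equality forces every Euclidean component $f_\alpha$ to satisfy $-\Delta_b f_\alpha=\lambda_2(-\Delta_b)(f_\alpha-\fint f_\alpha)$. Replacing $f$ by $\tilde f=f-\fint f$ leaves $H_b(f)$, $e_b(f)$, and the semi-isometric character unchanged, so we may assume that each $\tilde f_\alpha$ is a genuine $\lambda_2(-\Delta_b)$-eigenfunction. Lemma \ref{takahashi}(ii), applied with $\lambda=\lambda_2(-\Delta_b)$, then gives that $\tilde f(M)$ lies in $\mathbb{S}^{m-1}(r)$ with $r=\sqrt{2n/\lambda_2(-\Delta_b)}$ and that $\tilde f$ is pseudo-harmonic into that sphere. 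Translating back, $f(M)$ lies in a sphere of the same radius (centered at $\fint f$) and $f$ is pseudo-harmonic into it, since pseudo-harmonicity depends only on the intrinsic geometry of the target.

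Conversely, if $f(M)$ lies in some sphere of center $c\in\R^m$ and radius $r=\sqrt{2n/\lambda_2(-\Delta_b)}$, with $f$ pseudo-harmonic into it, I would translate $\R^m$ by $-c$ to place the sphere at the origin and apply the converse part of Lemma \ref{takahashi}(ii) to get $-\Delta_b(f_\alpha-c_\alpha)=\frac{2n}{r^2}(f_\alpha-c_\alpha)=\lambda_2(-\Delta_b)(f_\alpha-c_\alpha)$; since $\lambda_2(-\Delta_b)$-eigenfunctions are $L^2$-orthogonal to constants, one automatically has $c_\alpha=\fint f_\alpha$, which is exactly the equality condition of Theorem \ref{them3} and therefore gives equality in \eqref{reilly iso}. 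The only non-automatic point is the bookkeeping under translation of $\R^m$, and it is precisely resolved by this eigenfunction-versus-constant orthogonality, so I do not expect any serious obstacle beyond invoking Theorem \ref{them3} and Lemma \ref{takahashi} in the right order.
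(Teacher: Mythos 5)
Your argument is correct and follows exactly the route the paper intends: the inequality is Theorem \ref{them3} specialized via $E_b(f)=nV(M,\theta)$, and the equality case is the combination of the equality characterization in Theorem \ref{them3} with Lemma \ref{takahashi}(ii), which is all the paper itself offers (it states the corollary as an immediate consequence without written proof). Your only addition is the careful bookkeeping of the sphere's center under the normalization $f\mapsto f-\fint f$, resolved by the orthogonality of $\lambda_2$-eigenfunctions to constants; this is a detail the paper leaves implicit and you handle it correctly.
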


Similarly, for CR manifolds mapped into the Heisenberg group, one has the following

 \begin{theorem}\label{them4'}
 Let $(M,\theta)$ be a compact strictly pseudoconvex CR manifold of dimension $2n+1$ without boundary.\\ 
 i) Let $f:M\longrightarrow \mathbb{H}^{m}=\R^{2m}\times \R$  be any  $C^2$ map satisfying  $df (H(M))\subseteq H(\mathbb{H}^{m})$. Then   
    $$\lambda_2(-\Delta_b) E_b(f)\leq \frac{1}{2}\int_M |H_b(f)|_{\mathbb{H}^m}^2$$
    where the equality holds if and only if the first $2m$ components $f_1, \dots, f_{2m}$ of $f$ satisfy $-\Delta_b f_\alpha=\lambda_2(-\Delta_b)\left(f_\alpha  - \fint f_\alpha\right)$ for every $\alpha\le 2m$.\\  
 ii) Let $f:M\longrightarrow \mathbb{H}^{m}$  be any  $C^2$ semi-isometric map satisfying  $df (H(M))\subseteq H(\mathbb{H}^{m})$. Then  
    $$\lambda_2(-\Delta_b)\leq \frac{1}{2nV(M,\theta)}\int_M |H_b(f)|_{\mathbb{H}^m}^2.$$
 Moreover, the equality holds in this last inequality if and only if $f(M)$ is contained in the product $\mathbb{S}^{2m-1}(r)\times \R\subset \mathbb{H}^{m}$ with $r=\sqrt{\frac {2n}{\lambda_2(-\Delta_b)} }$, and $\pi\circ f$ is a pseudo-harmonic map from $(M,\theta)$ to the sphere $S^{2m-1}(r)$, where $\pi: \mathbb{H}^{m}\to\R^{2m}$ is the standard projection. 
 
\end{theorem}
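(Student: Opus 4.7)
The plan is to follow the template of Theorem \ref{them3} and Corollary \ref{them3'}, using Proposition \ref{pro1} as the key algebraic substitute for the Euclidean identity $H_b(f)=(\Delta_b f_1,\ldots,\Delta_b f_m)$. For part (i), I write $f=(F_1,\ldots,F_m,\alpha)$ with $F_j=\varphi_j+i\psi_j$ and assemble the first $2m$ real components into $\bar f=(\varphi_1,\psi_1,\ldots,\varphi_m,\psi_m)$. The hypothesis $df(H(M))\subseteq H(\mathbb{H}^m)$ gives $df(W)=\sum_j(d\varphi_j(W)X_j+d\psi_j(W)Y_j)$ for horizontal $W$ (as in the proof of Theorem \ref{them2}), and hence $|df(W)|^2_{\mathbb{H}^m}=4\sum_j(d\varphi_j(W)^2+d\psi_j(W)^2)$; integrating over $M$ yields $E_b(f)=2\sum_j\int_M(|\nabla^H\varphi_j|^2+|\nabla^H\psi_j|^2)$.

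Setting $\tilde\varphi_j:=\varphi_j-\fint_M\varphi_j$ and likewise $\tilde\psi_j$, the variational characterization of $\lambda_2(-\Delta_b)$ applied componentwise gives $\lambda_2\int_M\tilde\varphi_j^2\leq\int_M|\nabla^H\varphi_j|^2$ (and similarly for $\tilde\psi_j$). Summing over $j$ yields $\lambda_2\int_M|\tilde F|_{\R^{2m}}^2\leq\tfrac12 E_b(f)$, where $\tilde F=(\tilde\varphi_1,\tilde\psi_1,\ldots)$. On the other hand, integration by parts gives $\tfrac12 E_b(f)=-\int_M\langle\tilde F,G\rangle_{\R^{2m}}$ with $G=(\Delta_b\varphi_1,\Delta_b\psi_1,\ldots,\Delta_b\psi_m)$, and Proposition \ref{pro1} gives $|G|^2_{\R^{2m}}=\tfrac14|H_b(f)|^2_{\mathbb{H}^m}$. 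Cauchy--Schwarz then produces $\bigl(\tfrac12 E_b(f)\bigr)^2\leq\int_M|\tilde F|^2\cdot\tfrac14\int_M|H_b(f)|^2_{\mathbb{H}^m}$, and chaining with the Rayleigh--Ritz bound yields the asserted inequality $\lambda_2 E_b(f)\leq\tfrac12\int_M|H_b(f)|^2_{\mathbb{H}^m}$. Equality along the chain forces equality in each per-component Rayleigh--Ritz bound, i.e., each $\tilde\varphi_j$ and $\tilde\psi_j$ lies in the $\lambda_2$-eigenspace of $-\Delta_b$ (trivially so when they vanish), which is exactly the stated equality condition.

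For part (ii), semi-isometry yields $|df(e_i)|_{g_{\mathbb{H}^m}}=|e_i|_{G_\theta}=1$ for any $G_\theta$-orthonormal frame $\{e_i\}$ of $H(M)$, so $e_b(f)=n$ and $E_b(f)=nV(M,\theta)$; substituting into (i) delivers the Reilly bound. For the equality case, part (i) tells us that each of the first $2m$ components of $f$, minus its mean, is a $\lambda_2$-eigenfunction. To invoke Lemma \ref{takahashi}, I plan to left-translate $f$ inside $\mathbb{H}^m$ by $L_h$ with $h_z=-\fint_M\pi\circ f$: since $L_h$ is a Webster isometry of $\mathbb{H}^m$ preserving the Levi distribution $H(\mathbb{H}^m)$, both the semi-isometry of $f$ and the horizontality condition $df(H(M))\subseteq H(\mathbb{H}^m)$ are preserved, while $\pi\circ f$ is brought to zero mean. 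An appropriate constant multiple of $\pi\circ f$ is then a semi-isometric map $M\to\R^{2m}$ whose components are $\lambda_2$-eigenfunctions, so Lemma \ref{takahashi}(ii) places its image in a sphere; accordingly $\pi\circ f$ takes values in a sphere $\mathbb{S}^{2m-1}(r)$ and is pseudo-harmonic there by Lemma \ref{takahashi}(i).

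The main obstacle is the equality case of (ii). The reduction to the mean-zero setting has to be carried out via left translation in the Heisenberg group rather than by Cartesian subtraction of a constant, since the latter would destroy the condition $df(H(M))\subseteq H(\mathbb{H}^m)$. Furthermore, $\pi\circ f$ is not itself semi-isometric as a map into Euclidean $\R^{2m}$: it contracts horizontal lengths by a uniform factor arising from $G_{\theta_{\mathbb{H}^m}}(X_j,X_j)=4$, so Lemma \ref{takahashi} must be applied to a suitably rescaled copy, with the resulting normalizations tracked carefully in order to recover the radius stated in the theorem.
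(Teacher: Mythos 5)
Your argument is correct and lands on the same underlying mechanism as the paper (Proposition \ref{pro1} plus the Euclidean Reilly machinery), but the execution differs enough to be worth noting. For part (i) the paper does not redo the Rayleigh--Ritz/Cauchy--Schwarz chain in components as you do: it sets $\tilde f=\pi\circ f$, observes via Corollary \ref{cor0} that $\beta_\pi\equiv 0$ (so $\beta_{\tilde f}=d\pi\circ\beta_f$, hence $H_b(\tilde f)=d\pi(H_b(f))$), and uses the horizontality of $H_b(f)$ from Proposition \ref{pro1} to get $|H_b(\tilde f)|^2_{\R^{2m}}=\tfrac14|H_b(f)|^2_{\mathbb{H}^m}$ and $E_b(\tilde f)=\tfrac14E_b(f)$; part (i) with its equality case is then literally Theorem \ref{them3} applied to $\tilde f$. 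Your componentwise rerun is equivalent (your vector $G$ is exactly $H_b(\tilde f)$ and your $\tfrac12E_b(f)$ is $2E_b(\tilde f)$), just less economical. For part (ii) the paper simply applies Corollary \ref{them3'} to the semi-isometric Euclidean-valued map $2\pi\circ f$; your left-translation device is a legitimate and indeed more scrupulous way to reduce to zero mean while preserving $df(H(M))\subseteq H(\mathbb{H}^m)$, but it is not needed: once one has passed to the $\R^{2m}$-valued map $2\pi\circ f$, ordinary Cartesian translation is harmless and Lemma \ref{takahashi} applies as in Corollary \ref{them3'}. One point you flag but do not close is the final radius: applying Lemma \ref{takahashi}(ii) to $2\pi\circ f$ places $2\pi\circ f(M)$ in a sphere of radius $\sqrt{2n/\lambda_2}$, so $\pi\circ f(M)$ lies in a sphere of half that radius; this factor-of-two tension is already present between the paper's own statement and its proof, so it is not a defect of your argument, but if you keep the left-translation route you should state explicitly which normalization of the sphere you end up with.
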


\begin{proof}
i) Let $f:M\longrightarrow \mathbb{H}^{m}=\R^{2m}\times \R$  be a  $C^2$ map satisfying  $df (H(M))\subseteq H(\mathbb{H}^{m})$ and set $\tilde f:= \pi\circ f:M\longrightarrow \R^{2m}$ where $\pi: \mathbb{H}^{m}\to\R^{2m}$ is the standard projection. One has, for every pair $(X,Y)$ of horizontal vectors,
 $$\beta_{\tilde f}(X,Y)= \beta_\pi(df(X), df(Y))+d\pi (\beta_f(X,Y)).$$
Since for any $X\in H(\mathbb{H}^{m})$, $|d\pi(X)|_{\mathbb{R}^{2m}}^2=\frac 14|X|_{\mathbb{H}^{m}}^2$ (see \eqref{equa9'}) and $d\pi(T)=0$,  one can easily check that  $\beta_\pi\equiv0$ (Corollary \ref{cor0}) and, then, $\beta_{\tilde f}(X,Y)=d\pi (\beta_f(X,Y))$. Thus, $H_b(\tilde f)=d\pi(H_b(f))$ and, since $H_b(f)$ is horizontal (Proposition \ref{pro1}), $ |H_b(\tilde f)|_{\mathbb{R}^{2m}}^2=\frac 14|H_b(f)|_{\mathbb{H}^{m}}^2$. On the other hand, it is clear that  $e_b(\tilde f)=\frac 14 e_b(f)$ and, then, $E_b(\tilde f)=\frac 14 E_b(f)$. Therefore, it suffices to apply Theorem \ref{reilly} to complete the proof of the first part of the theorem. 

\medskip

\noindent ii) Assume now that the map $f$ is semi-isometric. Using the assumption that $f$ preserves horizontality, i.e., $df (H(M))\subseteq H(\mathbb{H}^{m})$, one checks that  the map $ 2 \pi\circ f$  is also semi-isometric. Applying Corollary \ref{them3'} to the latter we easily deduce what is stated in  part (ii) of the theorem.

\end{proof}

\section{Eigenvalues of the Horizontal Laplacian on a Carnot group}
A Carnot group of step $r$ is a connected, simply connected, nilpotent Lie group $G$ whose Lie algebra $\mathfrak{g}$ admits a stratification $$\mathfrak{g}=V_1\oplus...\oplus V_r$$ so that $[V_1,V_j]=V_{j+1},\;j=1,...,r-1$ and $[V_i,V_j]\subset V_{i+j},\;j=1,...,r$, with $V_k=\{0\}$ for $k>r$. We also assume  that $\mathfrak{g}$ carries   a scalar product $\left\langle ,\right\rangle_\mathfrak{g}$ for which the subspaces $V_j$ are mutually orthogonal. 
The layer $V_1$ generates the whole $\mathfrak{g}$ and induces a sub-bundle $HG$ of $TG$ of rank $d_1=\dim V_1$ that we call the horizontal bundle of the Carnot group. The Heisenberg group $\mathbb{H}^{d}$ is  the simplest example of a Carnot group of step 2. 

\medskip

For each $i\le r$, let $\{e^i_1,\cdots, e^i_{d_i}\}$ be an orthonormal basis of $V_i$ and denote by $\{X^i_1,\cdots, X^i_{d_i}\}$ the system of left invariant vector fields that coincides with $\{e^i_1,\cdots, e^i_{d_i}\}$ at the identity element of $G$. We consider the Riemannian metric $g_G$ on $G$ with  respect to
which the family $\{X^1_1,\cdots, X^1_{d_1}, \cdots, X^r_1,\cdots, X^r_{d_r}\}$
constitute an orthonormal frame for $TG$. The   corresponding Levi-Civita connection  $\nabla$ induces a connection  $\nabla^H$ on $HG$ that we call ``horizontal connection" : If X  and $Y$ are smooth sections of $HG$, then $\nabla^H_XY=\pi_H \nabla_XY$, where $\pi_H:TG\to HG$ is the orthogonal projection.  The horizontal Laplacian $\Delta_H $ is then defined for every $C^2$ function on $G$ by
$$\Delta_H u:=\mbox{trace}_H\nabla^Hdu=\sum_{i\le d_1}X^1_i\cdot \left(X^1_i\cdot u\right),$$
where the last equality follows from the fact that $\nabla^H_{X^i_1} X^j_1=0$ for any $i,j=1\dots d_1$.
The operator  $\Delta_H $ is a hypoelliptic operator of Hörmander type.

\begin{theorem}
Let $G$ be a Carnot group and let  $\Omega$ be a bounded domain in $G$. Let $V$ be a function on $\Omega$ so that the  operator $-\Delta_H +V$,  with Dirichlet boundary conditions if  $\Omega \neq G$, admits a purely discrete spectrum $\{\lambda_j\}_{j\geq1}$ which is bounded from below. Then, for every $k\geq 1$ and $p\in \R,$
  \begin{equation*}
    \sum_{i=1}^k\big(\lambda_{k+1}-\lambda_i\big)^p \leq \frac {\max \{4,2p\}}d \sum_{i=1}^k \big(\lambda_{k+1}-\lambda_i\big)^{p-1}\big(\lambda_i -T_i\big),
  \end{equation*}
 where $d$ is the rank of the horizontal distribution $HG$,  $T_i=\int_\Omega V u_i^2 v_G$ and $v_G$ is the Riemannian volume element associated with $g_G$. Moreover, if $V$ is bounded below on $\Omega$, then for every  $k\geq 1$, 
$$\lambda_{k+1}\leq \left(1+\frac{4}{d}\right)\frac{1}{k}\sum_{i=1}^k \lambda_i-\frac 4d \inf_{\Omega}V$$
and
 $$\lambda_{k+1}\leq \left(1+\frac{4}{d}\right)k^{\frac{2}{d}}\lambda_1- C(d,k)\inf_\Omega V$$
 with $C(d,k)= (1+\frac{4}{d})k^{\frac{2}{d}}-1.$
\end{theorem}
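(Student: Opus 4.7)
The plan is to mimic the proof of Theorem \ref{them1} by applying the Ashbaugh--Hermi commutator lemma (Lemma \ref{lma2}) to $A = -\Delta_H + V$ and to a well-chosen family of ``coordinate functions'' on $G$ that play the role of the components of a semi-isometric map. A convenient choice comes from the first-stratum exponential coordinates: since $G$ is connected, simply connected and nilpotent, $\exp\colon\mathfrak{g}\to G$ is a global diffeomorphism, so writing $g = \exp\bigl(\sum_{i,j} \xi^i_j(g)\,e^i_j\bigr)$ defines global coordinates, and the $d=d_1$ functions $\xi^1_1,\dots,\xi^1_d$ will serve as our $B$-operators.

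The key observation, which is really the only non-routine point, is the global identity
$$X^1_k \,\xi^1_i \;=\; \delta_{ik} \qquad \text{on all of } G.$$
This follows from the Baker--Campbell--Hausdorff formula applied to $\log\bigl(g\exp(te^1_k)\bigr)$: every term beyond the linear one $\log g + te^1_k$ is an iterated commutator, and since $[V_j,V_1]\subseteq V_{j+1}$ (with $V_{r+1}=0$), each such bracket has zero component in $V_1$. Thus the $V_1$-projection of $\log\bigl(g\exp(te^1_k)\bigr)$ is simply $(\log g)_{V_1}+te^1_k$, and differentiating at $t=0$ gives the claim. Consequently $\nabla^H \xi^1_\alpha = X^1_\alpha$ in the orthonormal frame $\{X^1_j\}$, so $|\nabla^H \xi^1_\alpha|_{g_G}^2 = 1$ and $\Delta_H \xi^1_\alpha = 0$ identically.

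With these properties in hand I reproduce the commutator computations from the proof of Theorem \ref{them1}, which simplify considerably because $\Delta_H \xi^1_\alpha$ vanishes. Setting $B_\alpha=\xi^1_\alpha$ viewed as multiplication operators, exactly the same integration by parts yields
\begin{equation*}
\sum_{\alpha=1}^d \bigl\langle [A,B_\alpha]u_i,B_\alpha u_i\bigr\rangle_{L^2}
=\sum_\alpha \int_\Omega |\nabla^H \xi^1_\alpha|^2 u_i^2\,v_G = d,
\end{equation*}
while the cross-term $\int (\Delta_H\xi^1_\alpha)\langle\nabla^H\xi^1_\alpha,\nabla^H u_i^2\rangle$ and the $(\Delta_H\xi^1_\alpha)^2$ term both vanish (so no analogue of $H_b(f)$ survives), leaving
\begin{equation*}
\sum_{\alpha=1}^d \bigl\|[A,B_\alpha]u_i\bigr\|_{L^2}^2
= 4\sum_\alpha\int_\Omega (X^1_\alpha u_i)^2\,v_G
= 4\int_\Omega |\nabla^H u_i|^2\,v_G
= 4(\lambda_i - T_i).
\end{equation*}
Feeding these two identities into Lemma \ref{lma2} and summing over $\alpha$ gives the first inequality of the theorem, with the prefactor $4\max\{1,p/2\}/d = \max\{4,2p\}/d$.

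The two remaining estimates follow from the $p=2$ case by exactly the algebraic manipulation used at the end of the proof of Theorem \ref{them1}. Using $\lambda_i-T_i\le \lambda_i-\inf_\Omega V$ and setting $\bar\lambda_i := \lambda_i - \inf_\Omega V$, the Yang-type inequality $\sum(\bar\lambda_{k+1}-\bar\lambda_i)^2 \le \frac{4}{d}\sum(\bar\lambda_{k+1}-\bar\lambda_i)\bar\lambda_i$ combined with Cauchy--Schwarz, together with Cheng--Yang's argument as cited in the excerpt, produce respectively the PPW-type bound $\lambda_{k+1}\le (1+4/d)M_k - (4/d)\inf_\Omega V$ and the Yang-type bound $\lambda_{k+1}\le (1+4/d)k^{2/d}\lambda_1 - C(d,k)\inf_\Omega V$. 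The main obstacle, and the only genuinely new step, is establishing the global identity $X^1_k\xi^1_i=\delta_{ik}$; once that is in place the proof is a direct transcription of the CR argument with $2n$ replaced by $d$.
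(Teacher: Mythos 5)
Your proposal is correct and follows essentially the same route as the paper: the authors also take the first-stratum exponential coordinates $x_i(g)=\langle \exp^{-1}(g),e_i\rangle_{\mathfrak g}$, use the identities $X_j\cdot x_i=\delta_{ij}$ and $\Delta_H x_i=0$, and feed the resulting commutator computations into Lemma \ref{lma2} with $A=-\Delta_H+V$ and $B=x_\alpha$, before concluding exactly as in Theorem \ref{them1}. The only difference is that you derive the key identity $X^1_k\xi^1_i=\delta_{ik}$ from the Baker--Campbell--Hausdorff formula, whereas the paper simply cites it from Danielli--Garofalo--Nhieu; your derivation is a correct proof of that cited fact.
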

\begin{proof}
Let $\{e_1,\dots,e_d\}$ be an orthonormal basis of the subspace $V_1$ and denote by $\{X_1,\cdots, X_{d}\}$ the system of left invariant vector fields that coincides with $\{e_1,\dots,e_d\}$ at the identity element of $G$.  Since the group $G$ is nilpotent, the exponential map $\exp :\mathfrak{g}\longrightarrow G$ is a global diffeomorphism. We can define, for each $i\le d$, a smooth map $x_i:G\to\R$ by
$$x_i(g):=\left\langle\exp^{-1}(g),e_i\right\rangle_\mathfrak{g}.$$
These functions satisfy (see \cite[Proposition 5.7]{danielli-garofalo-nhieu}), $\forall i,j=1,...,m$,
 $$X_j\cdot x_i=\delta_{ij} \; \; \mbox{and} \; \; \Delta_H x_i=0.$$

Again, we apply Lemma \ref{lma2} with $A=-\Delta_H+V$ and $B=x_\alpha$, $1\leq\alpha\leq m$. We need to deal with the calculation of $\left\langle [-\Delta_H+V,x_\alpha]u_i,x_\alpha u_i\right\rangle_{L^2}$ and $\left\|[-\Delta_H+V,x_\alpha]u_i\right\|^2_{L^2}$, where $\{u_i\}_{i\ge1}$ a complete orthonormal family of eigenfunctions  with $(-\Delta_b +V) u_i=\lambda_i u_i$. We have after a straightforward calculation :
$$ [-\Delta_H+V,x_\alpha]u_i=-2X_\alpha\cdot u_i.$$
Integrating by parts we get
$$\int_\Omega \left(X_\alpha\cdot u_i\right)x_\alpha u_i=\frac 12\int_\Omega \left(X_\alpha\cdot u^2_i\right)x_\alpha =-\frac 12\int_\Omega u_i^2 \left(X_\alpha\cdot x_\alpha\right)=-\frac 12\int_\Omega u_i^2 =-\frac 12.$$
Thus, 
\begin{eqnarray*}
      \sum_{\alpha=1}^d \left\langle [-\Delta_H+V,x_\alpha]u_i,x_\alpha u_i\right\rangle_{L^2}=-2 \sum_{\alpha=1}^d \int_\Omega \left(X_\alpha\cdot u_i\right)x_\alpha u_i=d .
   \end{eqnarray*}  
On the other hand, we have 
$$\sum_{\alpha=1}^d\left\|[-\Delta_H+V,x_\alpha]u_i\right\|^2_{L^2}=4\sum_{\alpha=1}^d\int_\Omega \left|X_\alpha \cdot u_i\right|^2=4\left(\lambda_i-T_i\right) $$
Putting these identities in Lemma $3.1$, we obtain the first inequality of the theorem.\\

The rest of the proof is identical to that of Theorem \ref{them1}.

\end{proof}

\bibliographystyle{plain}
\bibliography{biblio}

\end{document}